\theoremstyle{plain}
\newtheorem{theorem}{Theorem}
\newtheorem{lemma}{Lemma}
\newtheorem{prop}{Proposition}
\theoremstyle{definition}
\newtheorem{definition}{Definition}
\newtheorem{example}{Example}
\newtheorem{remark}{Remark}
\author[L.~Guo]{Liang Guo}
\address[L.~Guo]{Shanghai Institute for Mathematics and Interdisciplinary Sciences, Shanghai, 200433, P.~R.~China}
\email{liangguo@simis.cn}
\author[J.~Qian]{Jin Qian}
\address[J.~Qian]{Research Center for Operator Algebras, School of Mathematical Sciences, East China Normal University, Shanghai, 200241, P. R. China}
\email{52265500013@stu.ecnu.edu.cn}
\author[Q.~Wang]{Qin Wang}
\address[Q.~Wang]{Research Center for Operator Algebras, 
% and Shanghai Key Laboratory of Pure Mathematics and Mathematical Practice, 
School of Mathematical Sciences, East China Normal University, Shanghai, 200241, P. R. China.}
\email{qwang@math.ecnu.edu.cn}
\title[A nonstanadard analysis approach to limit operators]{A nonstanadard analysis approach to limit operators and Fredholmness in Roe-like algebras}
\begin{document}

\maketitle

\begin{abstract}

Let $(X,d)$ be a uniformly locally finite metric space, and $T$ an operator in the uniform Roe algebra $C_u^*(X)$ (or uniform quasi-local algebra $C_{ql}^*(X)$). In this paper, we introduce the concept of limit operators of $T$ on galaxies in the nonstandard extension of $X$, and prove that $T$ is a generalized Fredholm operator with respect to the ghost ideal in $C_u^*(X)$ (or $C_{ql}^*(X)$) if and only if all limit operators on afar galaxies are invertible, and their inverses are uniformly bounded. In particular, if $X$ has Yu's Property A, then $T$ is a Fredholm operator if and only if all limit operators on afar galaxies are invertible. Using techniques in nonstandard analysis, our result strengthens a work of Špakula--Willett \cite{SpW} on the characterization of Fredholmness by using less limit operators.

\end{abstract}

\section{Introduction}

The study of limit operators of band-dominated operators on $\mathbb{Z}^n$ was initiated in \cite{RRS04}. It was broadened to the class of band-dominated operators on discrete groups with Yu's Property A by J.~Roe in \cite{Roe05}. They proved that a band-dominated operator is Fredholm if and only if all its limit operators are invertible and their inverses are uniformly bounded. The condition of uniform boundedness was later proved to be satisfied automatically, hence can be removed in \cite{LS14} in the case of $\mathbb{Z}^n$. Their arguments can be further generalized to discrete groups with Property A (see \cite{Roeblog}).

In \cite{SpW}, J. Špakula and R. Willett further generalized limit operators to band-dominated operators on strongly discrete metric spaces with bounded geometry and Property A. They defined limit spaces for such a metric space, and associated to each limit space a limit operator of a given band-dominated operator. It was proved that a band-dominated operator is Fredholm if and only if all its limit operators are invertible \cite{SpW}. 

In this paper, we exploit the techniques from nonstandard analysis to study limit operators. Let $(X,d)$ be a uniformly locally finite metric space, and $({}^*X,{}^*d)$ its nonstandard extension (for all nonstandard analysis terms in the paragraph, see Section 2.1). For $x,y\in{}^*X$, ${}^*d(x,y)$ being finite defines an equivalent relation on ${}^*X$. An equivalent class under this equivalent relation is called a \emph{galaxy}, and a galaxy infinitely distant away from $X$ is called \emph{afar}. For an operator $T$ on $\ell^2(X)$ with kernel function $k_T$, the standard part of the nonstandard extension of $k_T$, restricting its domain to $G\times G$, gives a kernel function of an operator $T_G$ on every galaxy $G$, i.e., $k_{T_G}:=st\circ {}^{*}k_T|_{G\times G}$. For an afar galaxy $G$, we call $T_G$ the \emph{limit operator} of $T$ on $G$. 

We prove the following main result in this paper. 

\begin{theorem}

Let $(X,d)$ be a uniformly locally finite metric space and $T \in C^*_u(X)$ (respectively, $T\in C^*_{ql}(X)$).

\begin{enumerate}

\item The operator $T$ is a generalized Fredholm operator with respect to the ghost ideal in $C^*_u(X)$ (respectively, $C^*_{ql}(X)$) if and only if all limit operators of $T$ on afar galaxies are invertible, and their inverses are uniformly bounded.
\item If $X$ has Yu's Property A, then $T$ is a Fredholm operator if and only if all limit operators of $T$ on afar galaxies are invertible.

\end{enumerate}

\end{theorem}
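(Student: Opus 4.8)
The plan is to prove part (1) in both directions and then deduce part (2) from it using two features special to Property A spaces. Throughout I would rely on two structural properties of the assignment $G\mapsto T_G$, to be established as lemmas from its nonstandard description: that it is multiplicative, $(TS)_G=T_GS_G$, and norm-contracting, $\|T_G\|\le\|T\|$, on each galaxy. For the ``only if'' direction, the key point is that the ghost ideal is annihilated by the limit-operator construction on afar galaxies. Indeed, if $K$ is a ghost operator then for every $\varepsilon>0$ its kernel satisfies $|k_K(x,y)|<\varepsilon$ once $x$ or $y$ leaves a suitable bounded set; since every point of an afar galaxy $G$ is infinitely distant from $X$, transfer gives that ${}^*k_K(x,y)$ is infinitesimal for all $x,y\in G$, whence $k_{K_G}=st\circ{}^*k_K|_{G\times G}\equiv 0$ and $K_G=0$. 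Consequently, if $T$ is invertible modulo the ghost ideal with parametrix $S$, so that $TS-I$ and $ST-I$ are ghosts, then applying the multiplicative limit-operator map on any afar galaxy $G$ yields $T_GS_G=S_GT_G=I_{\ell^2(G)}$; hence each $T_G$ is invertible with $T_G^{-1}=S_G$ and $\|T_G^{-1}\|=\|S_G\|\le\|S\|$, a bound independent of $G$.

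The ``if'' direction is the substantial half. I would pass to the quotient $C^*$-algebra by the ghost ideal and invoke the standard fact that $T$ is invertible there iff both $T$ and $T^*$ are bounded below modulo the ghost ideal. The crux is then the identity
\[
\ell_{\mathrm{ess}}(T)=\inf_{G}\ell(T_G),
\]
where $\ell(\cdot)$ denotes the lower norm, the infimum runs over afar galaxies, and $\ell_{\mathrm{ess}}$ is the lower norm relative to the ghost ideal. Granting this, the uniform bound finishes the argument: invertibility of each $T_G$ with $\|T_G^{-1}\|\le C$ means $\ell(T_G)\ge 1/C$ for every afar $G$, so $\ell_{\mathrm{ess}}(T)\ge 1/C>0$, and likewise for $T^*$, giving invertibility modulo the ghost ideal. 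It is precisely here that uniform boundedness is indispensable: invertibility of the individual $T_G$ only yields $\ell(T_G)>0$ for each $G$, and over the (typically infinite) family of afar galaxies the infimum could otherwise vanish. To prove the identity I would argue both inequalities by transfer and localization. The bound $\ell_{\mathrm{ess}}(T)\le\inf_G\ell(T_G)$ follows by lifting a near-minimizing unit vector $w\in\ell^2(G)$ to an internal vector supported on $G$, hence infinitely far from $X$, witnessing a small essential lower norm. For the reverse inequality, if $\ell_{\mathrm{ess}}(T)$ is small I would produce standard unit vectors $v_n$ with $\|Tv_n\|$ small and supports escaping to infinity, transfer and use saturation to obtain an internal unit vector $v$ with $\|{}^*Tv\|$ infinitesimal and support infinitely distant from $X$, and then localize $v$ to a single afar galaxy $G$: after approximating $T$ by a band operator, its finite propagation keeps $({}^*Tv)|_G$ infinitely close to $T_G(st(v|_G))$, producing $w\in\ell^2(G)$ of norm one with $\|T_Gw\|$ small. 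I expect this localization to be the principal obstacle—controlling how the witnessing vector distributes across distinct galaxies and confirming that its standard part on a suitable single galaxy is genuinely nonzero with small image under $T_G$.

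For part (2) I would reduce to part (1). First, for a uniformly locally finite space with Property A the ghost ideal coincides with the ideal of compact operators, so generalized Fredholmness with respect to the ghost ideal is exactly Fredholmness, in both $C^*_u(X)$ and $C^*_{ql}(X)$. It then remains to remove the uniform boundedness hypothesis, that is, to show that under Property A the invertibility of every limit operator on an afar galaxy already forces $\sup_G\|T_G^{-1}\|<\infty$. I would argue by contradiction: given afar galaxies $G_n$ with $\|T_{G_n}^{-1}\|\to\infty$, choose unit vectors $w_n\in\ell^2(G_n)$ with $\|T_{G_n}w_n\|\to 0$, and use the Property A partitions of unity—of small propagation and controlled overlap—together with a diagonal/saturation argument to reassemble the $w_n$ into a single unit vector on one afar galaxy $G$ that is annihilated by $T_G$, contradicting invertibility of $T_G$. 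This is the nonstandard analogue of the Lindner--Seidel automatic uniform boundedness phenomenon, and Property A is exactly what makes the family of limit operators rich enough to carry out the reassembly.
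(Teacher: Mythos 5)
Your part (1) ``only if'' half is correct and essentially matches the paper: ghosts have vanishing limit operators on afar galaxies, so a parametrix $S$ gives $T_G^{-1}=S_G$ with the uniform bound $\|S\|$. The gap is in your ``if'' half, and it is the one you flag yourself. Your identity $\ell_{\mathrm{ess}}(T)=\inf_G\nu(T_G)$ is true, but the proposed proof via internal witness vectors genuinely fails at the localization step: a vector $v=[v_i]$ in the metric ultrapower with $\|Tv_i\|$ small and supports escaping to infinity may have \emph{zero} component in $\ell^2(X^\omega)$ altogether (take $v_i$ spreading out with $\|v_i\|_\infty\to 0$), so no galaxy carries any of its mass; this is precisely the phenomenon that makes the relevant ideal the ghosts rather than the compacts, and no amount of band-approximation repairs it. The paper avoids lower norms here entirely: it first shows (Propositions 1--3) that for $T\in C^*_{ql}(X,d)$ the extension $T^\omega$ is block diagonal with respect to $\ell^2(X)\oplus\ell^2(X^\omega\setminus X)\oplus H_3$ and that $T_2=\oplus_{G}T_G$ over afar galaxies; then $T\mapsto T_2$ is a unital $*$-homomorphism whose kernel is exactly the ghost ideal, so invertibility modulo ghosts is equivalent (by spectral permanence) to invertibility of $T_2$ in $B(H_2)$, which for a block diagonal operator is equivalent to invertibility of all $T_G$ with uniformly bounded inverses. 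Your identity then drops out for free, since $\nu(a)=\min\mathrm{sp}(|a|)$ is algebra-independent and $\|T_2v\|^2=\sum_G\|T_Gv_G\|^2$; the localization difficulty you anticipated is an artifact of working in $(\ell^2(X))^\omega$ instead of the corner $\ell^2(X^\omega\setminus X)$.

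For part (2), your reduction (ghost ideal $=$ compacts under Property A, so generalized Fredholmness is Fredholmness) is correct and is what the paper does; but your removal of uniform boundedness --- the substantial content of the paper's Section 6 --- is missing its key lemma, and the mechanism you name would not suffice. The obstruction: the witnesses $w_n\in\ell^2(G_n)$ with $\|T_{G_n}w_n\|\to 0$ may have supports of unbounded diameter, and then no diagonal or saturation argument can reassemble them into a single galaxy, since the diagonal element only sees bounded configurations. Property A enters not through partitions of unity but through the Špakula--Willett localization of lower norms (their Proposition 7.6, quoted as Proposition 10 in the paper): for every $\delta>0$ there is $s>0$, \emph{uniform over all finite truncating sets}, such that $\nu(TP_F)$ is attained up to $\delta$ on a subset $Y\subset F$ with $\mathrm{diam}(Y)\le s$. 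The paper propagates this to limit operators (Propositions 11--12), producing for each $G_n$ a witness supported in a ball of radius $r_k$ with telescoping error $1/2^k$, and then a diagonal argument using countable incompleteness of $\omega$ (Lemma 1 and Proposition 8) yields a single $x\in X^\omega$ with $\nu(T^\omega P_{B(x,r_n)})\to w$, proving that $\{\nu(T_G)\mid G\in AG(X^\omega)\}$ is \emph{closed}. Note also that your phrase ``annihilated by $T_G$'' overstates what is available and what is needed: one cannot produce an actual kernel vector, but closedness puts $0$ in the set itself, i.e.\ $\nu(T_G)=0$ for some single afar $G$, which already contradicts invertibility --- whereas merely finding galaxies with arbitrarily small lower norm, which is all a naive gluing yields, contradicts nothing.
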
 

A galaxy in $^{*}X$ is bijectively isometric to a limit space defined in \cite{SpW} via the hyper-principal map (see Section 5). Moreover, this bijection induces a unitary equivalence between the limit operators we defined and those introduced in \cite{SpW}. It is possible to choose a "large" ${}^*X$ so that every limit space can be realized as a galaxy, while in a "small" $^{*}X$, some limit spaces might be omitted (see Remark 1). In the latter case, the theorem allows us to prove Fredholmness by showing the invertibility of less limit operators, which enhances the result in \cite{SpW}. 

We fix a countably incomplete ultrafilter $\omega$ on an index set $I$, and use $X^\omega$, the set-theoretic (discrete) ultrapower of $X$, instead of the abstract nonstandard extension ${}^*X$. We will use diagonal arguments in ultrapowers instead of saturation arguments. Our proofs can be converted back into saturation arguments, thereby establishing the validity of the results for general nonstandard extensions. 

The paper is organized as follows. In Section 2, we briefly recall some basic concepts in nonstandard analysis and coarse geometry. In Section 3, we discuss the universal uniformly locally finite coarse structure on a set $X$ and introduce a new Roe-like algebra on $X$, so-called block diagonal algebras. In Section 4, we prove our main result Theorem 1(1) on generalized Fredholmness with respect to the ghost ideal. In Section 5, we compare our approach to limit operators with that in \cite{SpW}. In Section 6, we use countable saturation of ultraproduct from nonstandard analysis to remove the condition of uniform boundedness, when the underlying space has Yu's Property A.

\section{Preliminaries}

\subsection{Nonstandard analysis}

In this section, we recall some concepts in nonstandard analysis used in this paper (cf.~\cite{Go98}). 

Throughout this paper, $\omega$ stands for a countably incomplete ultrafilter on an index set $I$. Here an ultrafilter $\omega$ is countably incomplete if the index set $I$ can be decomposed into $I=\cup_{n\in\mathbb{N}}I_n$ such that $I_n$'s are pairwisely disjoint and all $I_n$'s are not in $\omega$. Equivalently, there exists a decreasing sequence of subsets of $I$, i.e., $I\supset J_0\supset J_1\supset\cdots\supset J_n\supset\cdots$, such that all $J_n$'s are in $\omega$, while $\cap_{n\in\mathbb{N}}J_n=\emptyset$. We focus on the ultrapower model of nonstandard extension, i.e., by a nonstandard extension of $X$, we mean $X^\omega$, the set-theoretic ultrapower of $X$.

Let $\mathbb{R}^\omega$ be the set-theoretic ultrapower of real number set $\mathbb{R}$. Elements in $\mathbb{R}^\omega$ are called hyperreals. The set of real numbers $\mathbb{R}$ embeds into $\mathbb{R}^\omega$ by mapping $r$ into $[r]$,  the equivalent class which the $i$-indexed sequcence with constant value $r$ belongs to. All functions and relations on $\mathbb{R}$ extends on $\mathbb{R}^\omega$ by $f^\omega([r_i]):=[f(r_i)]$. For example, the absluote value extends to $\mathbb{R}^\omega$ by $\big|[r_i]\big|^\omega=\big[|r_i|\big]$. For simplicity, we omit the upper index $\omega$ for operators like $+$, $>$, and regard $\mathbb{R}$ as a subset of $\mathbb{R}^\omega$. For example, for a hyperreal $x>^{\omega}[0]$, we write simply $x>0$.

By Łoś' theorem, a first order sentence is true in the ultrapower model if and only if it is true in the original model. Hence $\mathbb{R}^\omega$ is an ordered field. An hyperreal whose absolute value is smaller than any positive real number is called an infinitesimal, while a hyperreal whose absolute value is greater than any positive real number is said to be infinite. The set of hyperreals $\mathbb{R}^\omega$ contains nonzero infinitesimals and infinite numbers. For a finite (i.e. not infinite) hyperreal $x$, there exists a unique real number, called the standard part of $x$ and denoted by $st(x)$, such that $x-st(x)$ is an infinitesimal. The value of $st([x_i])$ is actually $\lim _{i\to\omega}x_i$.

Let $A$ be a countable set of infinite positive hyperreals, then by so-called \emph{countable saturation} of $\mathbb{R}^\omega$, $A$ has an infinite lower bound. This property will be used in Section 6.

Let $(X,d)$ be a metric space. Its set-theoretic ultrapower $(X^\omega,d^\omega)$ is a  $\mathbb{R}^\omega$-valued metric space. Let $\approx_F$ be the equivalent relation of having finite distance, and $\approx$ the equivalent relation of having infinitesimal distance. An equivalent class under $\approx_F$ is called a galaxy and $st\circ d^\omega$ is a (real-valued) pseudo-metric on it. An element (or a galaxy) in $(X^\omega,d^\omega)$ is called afar if it is infinitely distant from $X$. The set of all afar galaxies in $X^\omega$ is denoted by $AG(X^\omega)$. The galaxy in $X^\omega$ containing $X$ is called the standard galaxy. It is $X$ itself if and only if $X$ is a uniformly locally finite metric space (defined in the following subsection).

\subsection{Coarse geometry}

For a detailed introduction of basic concepts in coarse geometry, we refer the readers to \cite{Roe2003}.

Recall that a binary relation on $X$ is a subset of $X\times X$.
\begin{definition}
Let $X$ be a set. A coarse structure on $X$ is a set $\mathcal{E}$ of binary relations on $X$, such that:

\begin{enumerate}

\item Every finite binary relation belongs to $\mathcal{E}$.
\item $\mathcal E$ is closed under composition, inverse relation, sub-relation, and finite union.

\end{enumerate}
A coarse structure $\mathcal E$ is called unital if the equality relation $\{(x,x)\mid x\in X\}$ belongs to $\mathcal{E}$.
\end{definition}

Relations in $\mathcal{E}$ are called \emph{entourages}. For an entourage $E$, denote $E_x:=\{y\in X\mid (y,x)\in E\}$ and ${}_xE:=\{y\in X\mid (x,y)\in E\}$. A coarse structure is called \emph{uniformly locally finite (ULF)} if for any entourage $E$, there exists $n\in\mathbb{N}$ such that $\#E_x\leq n$ and $\#{}_xE\leq n$ for any $x\in X$, where $\#$ stands for the cardinality of a set.

\begin{example}
Let $(X,d)$ be a pseudo-metric space. There is a natural coarse structure on $X$ given by $\{E\subset X\times X\mid \sup\{d(x,y)\mid(x,y)\in E\}<\infty\}$.
\end{example}

\begin{definition}
Let $(X,\mathcal{E})$ be a coarse space, i.e, a set with a coarse structure, and $T\in B(\ell^2(X))$. 

\begin{enumerate}

\item We say $T$ has \emph{controlled propagation} if the support of its kernel function is an entourage. The set of all controlled propagation operators is denoted by $\mathbb{C}_u[X,\mathcal{E}]$ and its closure with respect to the operator norm in $B(\ell^2(X))$ is called the uniform Roe algebra of $(X,\mathcal{E})$, denoted by $C^*_u(X,\mathcal{E})$. Elements in $C^*_u(X,\mathcal{E})$ are also called \emph{band-dominated operators}.
\item We say $T$ is \emph{quasi-local} if for any $\varepsilon >0$, there exists an entourage $E$ such that for any $Y_1,Y_2\subset X$ with $(Y_1\times Y_2)\cap E=\emptyset$, $\|P _{Y_1}TP _{Y_2}\|<\varepsilon$, where $P _Y$ is the orthonormal projection from $\ell^2(X)$ to $\ell^2(Y)$. The set of all quasi-local operators is called the \emph{uniform quasi-local algebra} on $X$, denoted by $C^*_{ql}(X,\mathcal{E})$.

\end{enumerate}

\end{definition}

We say a metric space $(X,d)$ is ULF if the coarse structure induced by the metric is ULF, equivalently, for any $R>0$, $\sup\{\#B(x,R)\mid x\in X\}<\infty$, where $\#B(x,R)$ is the cardinality of the ball centered at $x$ with radius $R$. For discrete metric spaces, the ULF condition is also called \emph{bounded geometry}.

Note that if $(G,st\circ d^\omega)$ is a galaxy of $X^\omega$ and $[x_i]\in G$, we have $B([x_i],R)\subset [B(x_i,R^\prime)]$ for any real number $R^\prime > R$, where the right hand side is the ultraproduct of $B(x_i,R^\prime)$. Thus $\#B([x_i],R)\leq [\#B(x_i,R^\prime)]\leq\sup\{\#B(x,R^\prime)\mid x\in X\}$, hence the galaxies of $X^\omega$ are uniformly ULF. 

We denote the uniform Roe algebra (respectively, the uniformly quasi-local algebra) with respect to the coarse structure induced by its metric as $C^*_u(X,d)$ (respectively, $C^*_{ql}(X,d)$), and omit the metric $d$ unless necessary. An operator $T$ is in $\mathbb{C}_u[X,d]$ if and only if there exists $R>0$, such that for any $x,y\in X$ with $d(x,y)>R$, $(\delta_x,T\delta_y)=0$. We say $T$ has propagation no greater than $R$ in that case. The operators in $\mathbb{C}_u[X,d]$ are said to have \emph{finite propagation}.  

It is known that $C^*_{u}(X,\mathcal{E})\subset C^*_{ql}(X,\mathcal{E})$, and if $(X,\mathcal{E})$ is induced by a ULF metric structure with Yu's Property A, they are actually the same \cite{Roe=ql}.

Property A, introduced by Yu in \cite{Yu00}, became a core concept in coarse geometry soon after, and now has tons of equivalent descriptions. In the last section, we will use Property A implicitly by using Proposition 7.6 in \cite{SpW}, whose proof relies essentially on Property A.

\section{The universal uniformly locally finite coarse structure and the block diagonal algebra}

%It is worth making a detour to the universal uniformly locally finite coarse structure, since the uniformly Roe algebra (respectively, uniform quasi-local algebra) of any ULF coarse structure embeds in the uniformly Roe algebra (respectively, uniform quasi-local algebra) of the universal uniformly locally finite coarse structure.

Let $X$ be a set. Recall that $\omega$ is a countably incomplete ultrafilter on an index set $I$. Then we have $\ell^2(X)\subset \ell^2(X^\omega)\subset (\ell^2(X))^{metric,\omega}$, where $ (\ell^2(X))^{metric,\omega}$ is the metric ultrapower of $ \ell^2(X)$. The second inclusion is given by $\delta_{[x_i]}\mapsto [\delta_{x_i}]$, where $\delta_x$ means the Dirac function at $x$. Since the set-theoretic ultrapower of  $\ell^2(X)$ is not involved in this paper, we (ab)use $(\ell^2(X))^{\omega}$ instead of $ (\ell^2(X))^{metric,\omega}$. For an infinite set $X$, all inclusions are strict. An operator $T\in B(\ell^2(X))$ extends to $T^\omega\in B( (\ell^2(X))^{\omega})$ by $T^\omega([v_i]):=[Tv_i]$, where the equivalent class is taking in metric ultrapower. 

Let $T\in B(\ell^2(X))$, and $k_T:X\times X \to \mathbb{C}$ the kernel function of $T$. We denote $k_T^\omega:X^\omega\times X^\omega\to \mathbb{C}^\omega$ the ultrapower of $k_T$. Let $P$ be the orthogonal projection of $(\ell^2(X))^{\omega}$ onto $ \ell^2(X^\omega)$. The corner of $T^\omega$ on $\ell^2(X^\omega)$, that is $PT^\omega$ regarded as an operator in $B( \ell^2(X^\omega))$, has a kernel function $st \circ k_T^{\omega}$. This can be proved by checking
$$(\delta_{[x_i]},T^\omega \delta_{[y_i]})=st\circ [(\delta_{x_i},T \delta_{y_i})]=st\circ [k_T(x_i,y_i)]=st\circ k_T^\omega([x_i],[y_i]).$$
It is straightforward that $\ell^2(X)$ is an invariant subspace of $T^\omega$, while $\ell^2(X^\omega)$ may not be the case.

\begin{prop}

Let $T\in B(\ell^2(X))$ and $\omega$ a countably incomplete ultrafilter on $I$, the following are equivalent:

\begin{enumerate}

\item $\ell^2(X^\omega)$ is an invariant subspace of $T^\omega$.
\item $\forall \varepsilon >0$, $\exists M>0$ such that $\forall x\in X$, $\exists v\in \ell^2(X)$ with $\#supp(v)\leq M$ and $\|T\delta _x -v\|<\varepsilon$.

\end{enumerate}

\end{prop}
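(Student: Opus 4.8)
The plan is to reduce the invariance condition (1) to a uniform tightness property of the family $\{T\delta_x\}_{x\in X}$, which turns out to be exactly condition (2). First I would observe that $T^\omega$ is bounded on $(\ell^2(X))^\omega$ and that $\ell^2(X^\omega)$ is the closed linear span of $\{\delta_{[x_i]}\mid [x_i]\in X^\omega\}$. Hence (1) holds if and only if $T^\omega\delta_{[x_i]}=[T\delta_{x_i}]$ lies in $\ell^2(X^\omega)$ for every $[x_i]\in X^\omega$; this converts the statement into a question about single bounded vectors of the Hilbert-space ultrapower.

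The key step is an intrinsic description of membership in $\ell^2(X^\omega)$. Writing $\tau_M(w):=\inf\{\|w-v\|\mid \#supp(v)\leq M\}$ for the best $M$-term approximation error of $w\in\ell^2(X)$ (attained by retaining the $M$ largest coordinates), I would prove that a bounded $[w_i]\in(\ell^2(X))^\omega$ lies in $\ell^2(X^\omega)$ if and only if $\lim_{M\to\infty}\lim_{i\to\omega}\tau_M(w_i)=0$. For the ``only if'' direction, approximating $[w_i]$ within $\varepsilon$ by a finite combination $\sum_k c_k\delta_{[x^{(k)}]}=[\sum_k c_k\delta_{x_i^{(k)}}]$ of basis vectors directly bounds $\lim_{i\to\omega}\tau_K(w_i)$ for the corresponding $K$. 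For the ``if'' direction, I would truncate each $w_i$ to its $M$ largest coordinates to obtain $v_i$ supported on at most $M$ points, then show that any such $[v_i]$ already lies in $\ell^2(X^\omega)$: computing the norm of its projection $P[v_i]$ against the at most $M$ distinct nonstandard points arising from the supports shows that the projection loses no norm, and $\|[w_i]-[v_i]\|=\lim_{i\to\omega}\tau_M(w_i)$ is small.

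Applying the lemma to $w_i=T\delta_{x_i}$, condition (1) becomes $\lim_{M\to\infty}\lim_{i\to\omega}\tau_M(T\delta_{x_i})=0$ for every $[x_i]$, while condition (2) reads $\forall\varepsilon\,\exists M\,\forall x\in X:\tau_M(T\delta_x)<\varepsilon$. The implication (2)$\Rightarrow$(1) is then immediate, since a uniform bound specializes to any sequence $(x_i)$. For the converse I would argue by contraposition, and here the countable incompleteness of $\omega$ is essential: it supplies $N:I\to\mathbb{N}$ with $\{i\mid N(i)\geq n\}\in\omega$ for every $n$, so that $N(i)\to\infty$ along $\omega$. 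If (2) fails, fix $\varepsilon>0$ with, for each $M$, some $x$ satisfying $\tau_M(T\delta_x)\geq\varepsilon$; choosing $x_i$ with $\tau_{N(i)}(T\delta_{x_i})\geq\varepsilon$ and using that $\tau_M$ is nonincreasing in $M$ gives $\lim_{i\to\omega}\tau_M(T\delta_{x_i})\geq\varepsilon$ for every fixed $M$, whence $[T\delta_{x_i}]\notin\ell^2(X^\omega)$ and (1) fails.

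I expect the main obstacle to be the tightness lemma, and in particular the verification that finitely supported vectors with a uniform standard support bound remain inside $\ell^2(X^\omega)$ after passing to the ultrapower, together with the diagonalization that converts a failure of uniformity over $x$ into a single bad nonstandard point $[x_i]$ via $N(i)\to\infty$. The routine points—boundedness of $T^\omega$, closedness of $\ell^2(X^\omega)$, and exchange of $\lim_{i\to\omega}$ with continuous functions such as $|\cdot|^2$—I would treat briefly.
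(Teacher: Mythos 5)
Your proposal is correct and follows essentially the same route as the paper's proof: the direction (2)$\Rightarrow$(1) approximates $T^\omega\delta_{[x_i]}$ by ultraproducts of vectors with uniformly bounded support, and (1)$\Rightarrow$(2) is the same contrapositive diagonalization, with your function $N(i)\to\infty$ along $\omega$ playing exactly the role of the paper's decomposition $I=\cup_n I_n$, $y_i=x_n$ for $i\in I_n$, followed by the same lower bound on the distance to finitely supported vectors of $\ell^2(X^\omega)$. Your $\tau_M$-tightness lemma is just a clean repackaging of these two computations, with the added merit that it spells out the step the paper only asserts, namely that a bounded ultraproduct of vectors with $\#supp(v_i)\leq M$ lies in $\ell^2(X^\omega)$.
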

\begin{proof}

(2) implies (1): For any $[x_i]\in X^\omega$ and $\varepsilon >0$, there exists $M>0$ as in the assumption. Let $v_i\in \ell^2(X)$ such that $\#supp(v_i)\leq M$ and $\|T\delta _{x_i} -v_i\|<\varepsilon$. Then $\|T^\omega ([x_i])-[v_i]\|\leq \varepsilon$ and $[v_i]\in \ell^2(X^\omega)$ with finite support. Hence $T^\omega ([x_i])\in \ell^2(X^\omega)$, which implies that $ \ell^2(X^\omega)$ is an invariant subspace of $T$.

(1) implies (2): Suppose that (2) is not true. Then $\exists \varepsilon_0 >0$, $\forall n>0$, $\exists x_n\in X$, such that $\forall v\in \ell^2(X)$ with $\#supp(v)\leq n$, $\|T\delta _{x_n} -v\|\geq \varepsilon_0$. By countable incompleteness of $\omega$, decomposite $I$ into disjoint union $\cup_{n\in \mathbb{N}} I_n$ with $I_n\notin \omega$ for all $n\in\mathbb{N}$. Define $y_i=x_n$ such that $i\in I_n$. Then for any $N\in \mathbb{N}$ and any $v\in \ell^2(X)$ with $\#supp(v)\leq N$, one has that
$T\delta_{y_i}$ is $\varepsilon_0$ away from $v$ for all $i\in I_n$ with $n\geq N$. Then for any $\xi\in \ell^2(X^\omega)$ with $supp(\xi)=K$ finite, let $\xi=\sum_{k=1}^{K}a^k\delta_{x^k}$, with $x^k=[x_i^k]\in X^\omega$. Then $\xi=[\sum_{k=1}^{K}a^kx_i^k]$. Hence $\|T^\omega\delta_{[y_i]}-\xi\|=st\circ [\|T\delta_{[y_i]}-\sum_{k=1}^{K}a^kx_i^k\|]\geq\varepsilon_0$. Thus, $T^\omega(\delta_{[y_i]})$ is $\varepsilon_0$ away from any finite support vector $\xi \in\ell^2(X^\omega)$, hence not in $ \ell^2(X^\omega)$.
\end{proof}

\begin{definition}

The block diagonal algebra is defined by
$$Q(X):=\left\{T\in B(\ell^2(X))\mid \ell^2(X^\omega)\text{ is an invariant subspace of }T^\omega\text{ and }{(T^*)}^\omega\right\}.$$

\end{definition}

Note that $T\in Q(X)$ if and only if $T^\omega$ is block diagonal with respect to $H=H_1\oplus H_2\oplus H_3$, where $H_1=\ell^2(X)$, $H_2$ is the orthogonal complement of $\ell^2(X)$ in $\ell^2(X^\omega)$, i.e., $\ell^2(X^\omega\setminus X)$, and $H_3$ is the orthogonal complement of $\ell^2(X^\omega)$ in $(\ell^2(X))^{\omega}$. Hence $Q(X)$ is a $C^*$-algebra and is independent of $\omega$ by Proposition 1. For $T\in Q(X)$, we denote the block of $T^\omega$ on $H_i$ as $T_i$, $i=1,2,3$. We call $T_2$ the \emph{universal limit operator} of $T$.

Now, we consider the relation between $Q(X)$ and the universal uniform quasi-local algebra $C^*_{ql}(X,\mathcal U)$. The following definition is due to \cite{STY}.

\begin{definition}
The \emph{universal uniformly locally finite (universal ULF) coarse structure} on $X$ is defined as
$$\mathcal{U}:=\{E\in X\times X\mid \exists n\in \mathbb{N}\text{ such that }\forall x \in X\text{, }\#E_x\leq n\text{ and }\#{}_xE\leq n)\}.$$
\end{definition}

The universal ULF coarse structure is the largest ULF coarse structure on $X$. We call the uniform Roe algebra (resp. uniform quasi-local algebra) with respect to universal ULF coarse structure on $X$ the uniform universal ULF Roe algebra (resp. quasi-local algebra) of $X$. It is the largest algebra among all uniform Roe algebras (resp. uniform quasi-local algebras) of $X$ with respect to ULF coarse structures.

\begin{prop}

If $T$ is in the uniform universal ULF quasi-local algebra on $X$, then $T\in Q(X)$.

\end{prop}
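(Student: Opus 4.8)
The plan is to reduce the statement to the membership criterion of Proposition 1 and then extract the required finite-support approximations directly from quasi-locality. Recall that by definition $T\in Q(X)$ means that both $T^\omega$ and $(T^*)^\omega$ leave $\ell^2(X^\omega)$ invariant, and by Proposition 1 this is equivalent to requiring that both $T$ and $T^*$ satisfy condition (2) there. Since the uniform quasi-local algebra is a $C^*$-algebra, $T^*\in C^*_{ql}(X,\mathcal{U})$ whenever $T$ is, so it suffices to prove the single implication: if $T\in C^*_{ql}(X,\mathcal{U})$, then $T$ satisfies condition (2) of Proposition 1. Applying this implication to $T^*$ then yields the invariance of $\ell^2(X^\omega)$ under $(T^*)^\omega$ as well, and hence $T\in Q(X)$.

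For the main implication, fix $\varepsilon>0$. By quasi-locality with respect to $\mathcal{U}$, there is an entourage $E\in\mathcal{U}$ such that $\|P_{Y_1}TP_{Y_2}\|<\varepsilon$ for all $Y_1,Y_2\subseteq X$ with $(Y_1\times Y_2)\cap E=\emptyset$. Since $E\in\mathcal{U}$, there is $n\in\mathbb{N}$ with $\#E_x\leq n$ for every $x\in X$; I set $M=n$. Now fix $x\in X$ and take the column slice $Y_2=\{x\}$ and $Y_1=X\setminus E_x$. By the very definition of $E_x$ one has $(Y_1\times\{x\})\cap E=\emptyset$, so $\|P_{X\setminus E_x}T\delta_x\|=\|P_{Y_1}TP_{\{x\}}\delta_x\|<\varepsilon$. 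Writing $v:=P_{E_x}T\delta_x$, the vector $v$ is supported in $E_x$, so $\#supp(v)\leq n=M$, while $\|T\delta_x-v\|=\|P_{X\setminus E_x}T\delta_x\|<\varepsilon$. This is precisely condition (2) of Proposition 1, with the crucial feature that $M$ is independent of $x$.

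The only point that requires care — and the reason the universal ULF hypothesis enters — is this uniformity of $M$. Quasi-locality alone furnishes, for each fixed $x$, a finite-support approximation of the column $T\delta_x$; what makes the cardinality bound $M$ independent of $x$ is exactly that the approximating support $E_x$ is controlled uniformly by the ULF parameter $n$ of the entourage $E$. I therefore expect no genuine obstacle beyond selecting the correct single-point column slice $Y_2=\{x\}$ and invoking the uniform row/column bound built into the definition of $\mathcal{U}$.
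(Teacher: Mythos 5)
Your proof is correct and follows essentially the same route as the paper's: both extract an entourage $E\in\mathcal{U}$ from quasi-locality, slice off the column over $\{x\}$ to get $\|P_{X\setminus E_x}T\delta_x\|<\varepsilon$, take $v=P_{E_x}T\delta_x$ with support bounded by the uniform ULF parameter of $E$, and run the same argument for $T^*$. Your write-up is in fact slightly more careful than the paper's, since you make explicit both the reduction via Proposition 1 and why $M$ is independent of $x$.
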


\begin{proof}

For any $\varepsilon>0$, since $T$ is in the universal ULF quasi-local algebra on $X$, there exists $M>0$ and $E\subset X\times X$ with every row and column no more than $M$ non-zero entries, such that for any $Y_1,Y_2\in X$ with $(Y_1\times Y_2) \cap E=\emptyset$, $\|P_{Y_1} T P_{Y_2}\|<\varepsilon$. Then for any $x\in X$, let $E_x:=\{y\in X\mid (y,x)\in E\}$. Then we have that $\|(P _{X\setminus E_x})T\delta_x\|<\varepsilon$ by quasi-locality. Hence, we can take $v=P_{E_x}T\delta_x$ so that it satisfies the condition given in Proposition 1 (2). The same argument applies to $T^*$.
\end{proof}

The uniform universal ULF Roe algebra is strictly contained in the uniform universal ULF quasi-local algebra (c.f. \cite{Oz}, in which the proof can be generalized to non-metrizable ULF coarse spaces straightforwardly). Since the uniform universal ULF quasi-local algebra is contained in $Q(X)$, we show that $C_u^*(X,\mathcal{U})\neq Q(X)$. This answers one of the open questions raised in \cite[Problem 4.3]{Ma}.

Recall that a \emph{ghost operator} is an operator $T\in B(\ell^2(X))$ whose kernel function $k_T\in C_0(X\times X)$. By basic nonstandard analysis, $k_T\in C_0(X\times X)$ if and only if $st\circ k_T^\omega (x,y)=0$ for any $(x,y)\in (X^\omega\times X^\omega\setminus X\times X)$. Hence for $T\in Q(X)$, $T$ is a ghost operator if and only if $T_2=0$. And for Hilbert space $H$, an operator $T\in B(H)$ is compact if and only if $T^\omega v=0$ for all $v\in H^{\omega,metric}$ that is orthogonal to $H$, see \cite[Proposition 2.40]{Ng} for example. Hence $T\in B(\ell^2(X))$ is compact if and only if $T\in Q(X)$ and $T_2=0$, $T_3=0$.

\section{Limit operators on galaxies}

In this section, $(X,d)$ is a ULF metric space. Let $Q(X,d)$ be the set of operators in the block diagonal algebra $Q(X)$ such that $T_2$ is also block diagonal with respect to the decomposition
$$H_2=\ell^2(X^\omega \setminus X)=\bigoplus_{G\in AG(X^\omega)}\ell^2(G),$$
i.e., $T_2=\oplus_{G\in AG(X^\omega)}T_G$. Equivalently, $T\in Q(X,d)$ if and only if for any afar galaxy $G$, $\ell^2(G)$ is an invariant subspace of both $T^\omega$ and ${(T^*)}^\omega$. We call the block of $T_2$ on $\ell^2(G)$ the \emph{limit operator} of $T$ on the afar galaxy $G$, and denote it by $T_G$.

\begin{prop}
Let $T\in C^*_{ql}(X,d)$. Then for any finite subsets $A,B\subset X^\omega$ with $d^{\omega}(A,B)$ infinite, we have $\|P_{A} T^\omega P_{B}\|=0$.
\end{prop}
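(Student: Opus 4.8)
The plan is to exploit quasi-locality of $T$ together with the transfer principle (Łoś' theorem), reducing the claim about the nonstandard operator $T^\omega$ to a statement about the original operator $T$. The key observation is that quasi-locality provides, for each $\varepsilon > 0$, a single entourage $E$ controlling the norm of $P_{Y_1} T P_{Y_2}$ whenever $Y_1 \times Y_2$ avoids $E$, and since $E$ is an entourage it has a finite propagation bound $R_\varepsilon$; that is, $(x,y) \in E$ forces $d(x,y) \le R_\varepsilon$. The finiteness of $A$ and $B$ (in the sense of hyperfinite/internal finite cardinality) will let me transfer the norm estimate cleanly.

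First I would fix $\varepsilon > 0$ and invoke the definition of quasi-locality to obtain an entourage $E$ with associated radius $R_\varepsilon < \infty$ such that $\|P_{Y_1} T P_{Y_2}\| < \varepsilon$ for all $Y_1, Y_2 \subset X$ with $(Y_1 \times Y_2) \cap E = \emptyset$. The first-order statement ``for all $Y_1, Y_2$ with $d(Y_1, Y_2) > R_\varepsilon$, $\|P_{Y_1} T P_{Y_2}\| < \varepsilon$'' transfers to $X^\omega$: for all internal subsets $A', B' \subset X^\omega$ with $d^\omega(A', B') > R_\varepsilon$, we have $\|P_{A'} T^\omega P_{B'}\| \le \varepsilon$ (the strict inequality may weaken to non-strict under transfer and standard-part arguments, but this suffices). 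Since $d^\omega(A,B)$ is \emph{infinite}, in particular $d^\omega(A,B) > R_\varepsilon$, so $\|P_A T^\omega P_B\| \le \varepsilon$. As $\varepsilon > 0$ was arbitrary, I would conclude $\|P_A T^\omega P_B\| = 0$.

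The step requiring the most care is the transfer of the quasi-locality estimate. The definition of quasi-locality is stated for subsets of $X$, and the norm $\|P_{Y_1} T P_{Y_2}\|$ is a real number depending on $Y_1, Y_2$; to transfer I must express this as a first-order (or internal) property. Concretely, I would work in the ultrapower model: writing $A = [A_i]$ and $B = [B_i]$ as internal subsets arising from sequences of subsets $A_i, B_i \subset X$, the hypothesis $d^\omega(A,B)$ infinite means $d(A_i, B_i) \to \infty$ along $\omega$, so for $\omega$-almost every $i$ we have $d(A_i, B_i) > R_\varepsilon$ and hence $\|P_{A_i} T P_{B_i}\| < \varepsilon$. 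Then $\|P_A T^\omega P_B\| = st\big([\,\|P_{A_i} T P_{B_i}\|\,]\big) \le \varepsilon$ by taking standard parts through the ultrapower. Here the finiteness of $A$ and $B$ guarantees that the $A_i, B_i$ can be taken to be genuine subsets of $X$ (of bounded finite cardinality), so that $P_{A_i}, P_{B_i}$ are honest projections and the equality relating the ultrapower norm to the operator norm of the corner holds.

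The main obstacle I anticipate is the bookkeeping connecting the norm of the corner $P_A T^\omega P_B$ computed in $B(\ell^2(X^\omega))$ to the ultrapower-limit of the norms $\|P_{A_i} T P_{B_i}\|$ computed in $B(\ell^2(X))$. This requires knowing that restricting $T^\omega$ to the (finite-dimensional, since $A,B$ are finite) corner commutes appropriately with taking standard parts of the kernel, which is precisely the content of the earlier computation $(\delta_{[x_i]}, T^\omega \delta_{[y_i]}) = st \circ [k_T(x_i, y_i)]$ established before Proposition 1. Since $A$ and $B$ are finite, the corner is a finite matrix whose entries are standard parts of ultrapower entries, and its norm is controlled by the corresponding matrix norms on the representatives; the transfer is then routine, but one must be careful that ``finite'' means genuinely finite (standard cardinality) rather than hyperfinite, which I read as the intended hypothesis here.
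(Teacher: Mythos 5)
Your proposal is correct and follows essentially the same route as the paper's proof: both fix $\varepsilon>0$, use quasi-locality of $T$ to get a radius $r(\varepsilon)$, write the finite sets as ultraproducts $A=[A_i]$, $B=[B_i]$ of genuine finite subsets of $X$ with $d(A_i,B_i)\to\infty$ along $\omega$, and identify $\|P_A T^\omega P_B\|$ with $\lim_{i\to\omega}\|P_{A_i}TP_{B_i}\|\leq\varepsilon$ via the finite-dimensional corner. Your extra justification of that last identity through the kernel computation $(\delta_{[x_i]},T^\omega\delta_{[y_i]})=st\circ[k_T(x_i,y_i)]$ is a detail the paper leaves implicit, but the argument is the same.
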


\begin{proof}
Since $T\in C^*_{ql}(X,d)$, for any $\varepsilon>0$, there exists $r(\varepsilon)>0$ such that for any finite subset $Y_1,Y_2 \subset X$, $d(Y_1,Y_2)>r(\varepsilon)$ implies that $\|P_{Y_1} T^\omega P_{Y_2}\|<\varepsilon$. Since $A$ is finite, it is an ultraproduct $A=[A_i]$ with $\#A_i=\#A$ for $\omega$-almost all $i$'s, and the same applies to $B=[B_i]$. Since $d^{\omega}(A,B)$ infinite, $d(A_i,B_i)$ tends to infinity as $i\to\omega$. Hence $d(A_i,B_i)$ is eventually greater than any $r(\varepsilon)$, we have $\|P_{A_i} T P_{B_i}\|<\varepsilon$ eventually for any positive $\varepsilon>0$, hence tends to $0$, therefore $\|P_{A} T^\omega P_{B}\|=\lim _{i\to\omega}\|P_{A_i} T P_{B_i}\|=0$.
\end{proof}

Note that $C_{ql}^*(X,d) \subset C_{ql}^*(X,\mathcal{U}) \subset Q(X)$, Proposition 3 shows that $C_{ql}^*(X,d) \subset Q(X,d)$.  Since $T_2$ is block diagonal, the map from $T$ to $T_G$ is a $*$-homomorphism from $Q(X,d)$ to $B(\ell^2(G))$.

\begin{prop}

Let $G\in AG(X^\omega)$.
\begin{enumerate}

\item For $T\in \mathbb{C}_u[X,d]$, we have $T_G \in \mathbb{C}_u[G,st\circ d^{\omega}]$. 
\item For $T\in C^*_u(X,d)$, we have $T_G \in C^*_u(G,st\circ d^{\omega})$. 
\item For $T\in C^*_{ql}(X,d)$, we have $T_G \in C^*_{ql}(G,st\circ d^{\omega})$
\item For $T\in Q(X,d)$, we have $T_G \in Q(G,st\circ d^{\omega})$. 

\end{enumerate}

\end{prop}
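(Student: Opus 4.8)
The plan is to prove the four inclusions in increasing order of difficulty, transferring the defining structure of $T$ to the limit operator $T_G$ through the ultrapower, and exploiting throughout that the compression map $\phi_G\colon Q(X,d)\to B(\ell^2(G))$, $T\mapsto T_G$, is a $*$-homomorphism and hence norm contractive. For Part (1), if $T\in\mathbb{C}_u[X,d]$ has propagation at most $R$, I would show $T_G$ has propagation at most $R$ for $st\circ d^\omega$: for $[x_i],[y_i]\in G$ with $st\circ d^\omega([x_i],[y_i])>R$ the hyperreal $d^\omega([x_i],[y_i])$ exceeds $R$, so by Łoś' theorem $d(x_i,y_i)>R$ for $\omega$-almost all $i$, whence $k_T(x_i,y_i)=0$ $\omega$-almost everywhere and $k_{T_G}([x_i],[y_i])=st\circ k_T^\omega([x_i],[y_i])=0$. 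Since $(G,st\circ d^\omega)$ is ULF, the support of $k_{T_G}$ is then an entourage. For Part (2), choosing $T_n\in\mathbb{C}_u[X,d]$ with $\|T_n-T\|\to 0$, Part (1) gives $(T_n)_G\in\mathbb{C}_u[G,st\circ d^\omega]$, and contractivity of $\phi_G$ yields $\|(T_n)_G-T_G\|\le\|T_n-T\|\to 0$, so $T_G\in C^*_u(G,st\circ d^\omega)$.

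Part (3) cannot be reduced to Part (2), since quasi-local operators are not norm limits of finite-propagation ones in general; instead I would transfer quasi-locality directly, imitating Proposition 3. Fix $\varepsilon>0$, let $r=r(\varepsilon)$ satisfy that $d(Y_1,Y_2)>r$ forces $\|P_{Y_1}TP_{Y_2}\|<\varepsilon$ for subsets of $X$, and take the entourage $E=\{([x],[y])\in G\times G: st\circ d^\omega([x],[y])\le r\}$. Given $Y_1,Y_2\subset G$ with $(Y_1\times Y_2)\cap E=\emptyset$, it suffices to bound $\|P_{A_1}T^\omega P_{A_2}\|$ for finite $A_1\subset Y_1$, $A_2\subset Y_2$, since the norm is the supremum of such compressions and $P_{Y_1}T_GP_{Y_2}=P_{Y_1}T^\omega P_{Y_2}$. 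Writing $A_1=[A_1^{(i)}]$, $A_2=[A_2^{(i)}]$ as internal finite sets of constant cardinality, the hypothesis $st\circ d^\omega(A_1,A_2)>r$ gives $d(A_1^{(i)},A_2^{(i)})>r$ $\omega$-almost everywhere, so $\|P_{A_1^{(i)}}TP_{A_2^{(i)}}\|<\varepsilon$ eventually; exactly as in Proposition 3, $\|P_{A_1}T^\omega P_{A_2}\|=\lim_{i\to\omega}\|P_{A_1^{(i)}}TP_{A_2^{(i)}}\|\le\varepsilon$. Taking the supremum gives $\|P_{Y_1}T_GP_{Y_2}\|\le\varepsilon$, so $T_G\in C^*_{ql}(G,st\circ d^\omega)$.

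Part (4) is the substantive case and splits into two steps. That $T_G\in Q(G)$ I would obtain from Proposition 1 by lifting: given $\varepsilon>0$, take $M$ as in condition (2) for $T$, and for $g=[x_i]\in G$ choose $v_i$ with $\#supp(v_i)\le M$ and $\|T\delta_{x_i}-v_i\|<\varepsilon$; then $P_G[v_i]\in\ell^2(G)$ has support of size at most $M$ and satisfies $\|T_G\delta_g-P_G[v_i]\|\le\varepsilon$, so condition (2) holds for $T_G$, and symmetrically for $(T_G)^*=(T^*)_G$. The remaining point — block diagonality of $T_G$ over the afar galaxies of $(G^\omega,st\circ d^\omega)$, i.e.\ invariance of each such $\ell^2(H)$ under $(T_G)^\omega$ and its adjoint — is the hard part.

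To handle it, the plan is to pass to the iterated ultrapower: identify $(X^\omega)^\omega$ with the single ultrapower $X^{\omega\otimes\omega}$ for the product ultrafilter $\omega\otimes\omega$ on $I\times I$ (again countably incomplete), under which $G^\omega$ embeds and the afar galaxies of $(G^\omega,st\circ d^\omega)$ are realized as intersections with $G^\omega$ of afar galaxies of $X^{\omega\otimes\omega}$. First I would check that the two standard-part pseudometrics agree, $st\circ(st\circ d^\omega)^\omega=st\circ d^{\omega\otimes\omega}$ on $G^\omega$ (the discrepancy is a level-one infinitesimal, which the top standard part annihilates), so that the two galaxy decompositions coincide and $(T_G)^\omega$ is the compression of $T^{\omega\otimes\omega}$ to $\ell^2(G^\omega)$; then block diagonality of $T_G$ over $G^\omega$-galaxies reduces to that of $T$ over the afar galaxies of $X^{\omega\otimes\omega}$. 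The crux is that the hypothesis $T\in Q(X,d)$ must persist for the product ultrafilter, which I expect to follow either from the independence of the class $Q(X,d)$ from the choice of countably incomplete ultrafilter (the analogue for $Q(X,d)$ of the independence of $Q(X)$ noted after Proposition 1) or, equivalently, from a diagonal argument across the two index copies as announced in the introduction. I anticipate the main difficulty to lie precisely here: correctly managing the three-level decomposition $\ell^2(X)\subset\ell^2(X^\omega)\subset(\ell^2(X))^\omega$ under iteration, so that the $H_3$-component does not corrupt the norm bookkeeping and the leakage to distinct galaxies is genuinely the only obstruction that must vanish.
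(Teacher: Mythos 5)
Your parts (1)--(3) are correct and essentially the same as the paper's: propagation bounds transfer pointwise through the kernel, norm limits pass through the contractive $*$-homomorphism $T\mapsto T_G$, and quasi-locality transfers by testing on finite subsets exactly as in Proposition 3 (your explicit reduction of $\|P_{Y_1}T_G P_{Y_2}\|$ to finite compressions just makes explicit a step the paper leaves implicit). The first half of your part (4), lifting the Proposition 1 criterion to conclude $T_G\in Q(G)$, also matches the paper in spirit.

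Where you genuinely diverge is the second half of (4), and there your proposal has its hardest step deferred rather than done. The paper never touches iterated ultrapowers. Instead it upgrades Proposition 1 to a \emph{metric} characterization: $\ell^2(G)$ is invariant under $T^\omega$ for every afar galaxy $G$ if and only if for all $\varepsilon>0$ there exists $R>0$ such that every $T\delta_x$ is $\varepsilon$-approximated by some $v\in\ell^2(X)$ with $d(supp(v),x)\leq R$. It then verifies this condition for $T_G$ on $(G,st\circ d^\omega)$ directly: pick approximants $v_i$ for representatives $x_i$ and form $[v_i]$, which lies in $\ell^2(G)$ with support within $R$ of $[x_i]$ (uniform local finiteness of galaxies bounds $\#supp(v_i)$, so $[v_i]\in\ell^2(X^\omega)$). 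Since the characterization is intrinsic --- no ultrafilter appears in it --- membership $T_G\in Q(G,st\circ d^\omega)$ follows at once. Your route through $X^{\omega\otimes\omega}$ is viable in principle: the product of countably incomplete ultrafilters is countably incomplete, metric ultrapowers iterate, and the galaxy decompositions match as you argue. But its pivot --- that $Q(X,d)$ is independent of the choice of countably incomplete ultrafilter --- is exactly what you leave as an expectation, and the natural proof of that independence \emph{is} the characterization above (proved by the same diagonal argument as Proposition 1, noting that witnesses of failure must escape to infinity, since for fixed $x$ one has $\|T\delta_x - P_{B(x,n)}T\delta_x\|\to 0$). Once that lemma is in hand, the entire iterated-ultrapower scaffolding, including the delicate identification of $\ell^2$-spaces of $G^\omega$-galaxies inside $(\ell^2(X))^{\omega\otimes\omega}$ that you yourself flag as the danger point, becomes unnecessary: apply the characterization directly to $T_G$ on $G$ and you are done. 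So I would classify your (4) as a correct but circuitous plan whose missing ingredient coincides with the paper's key lemma; supplying that lemma both closes your gap and collapses your detour.
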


\begin{proof}\begin{itemize}
\item[(1)] If $T$ has propagation no more than $R$, then for any $[x_i],[y_i]\in G$ with $st\circ d^\omega([x_i],[y_i])>R$, we have $(\delta_x,T_G\delta_y)=(\delta_x,T^\omega\delta_y)=\lim_{i\to \omega}(x_i,Ty_i)=0$. Hence $T_G$ also has propagation no more than $R$.
\item[(2)] By (1) and the fact that $*$-homomorphism is norm decreasing.
\item[(3)] Since $T\in C^*_{ql}(X,d)$, for any $\varepsilon>0$, there exists $r>0$ such that for any finite subsets $Y_1,Y_2 \subset X$, $d(Y_1,Y_2)>r$ implies that $\|P_{Y_1} T^\omega P_{Y_2}\|<\varepsilon$. Then for any finite subsets $A,B\subset G$ with $d^{\omega}(A,B)>r$, $\|P_{A} T_G P_{B}\|=\lim _{i\to\omega}\|P_{A_i} T P_{B_i}\|\leq \varepsilon$. Thus $T_G$ is quasi-local.
\item[(4)] Similar to Proposition 1, one can prove that $\ell^2(G)$ is an invariant space of $T^\omega$ for every afar galaxy $G$ if and only if $\forall \varepsilon >0$, $\exists R>0$ such that $\forall x\in X$, $\exists v\in \ell^2(X)$ such that $d(supp(v),x)\leq R$ and $\|T\delta _x -v\|\leq\varepsilon$. For any $\varepsilon >0$, let $R$ be a coefficient satisfying the above condition of $T$. Then for any $[x_i]\in G$, let $v_i\in \ell^2(X)$ with $d(supp(v_i),x_i)\leq R$ and $\|T\delta _{x_i} -v_i\|\leq\varepsilon$. Then $[v_i]\in \ell^2(G)$ with $st\circ d^\omega(supp([v_i]),[x_i])\leq R$ and $\|T_G\delta _{[x_i]} -[v_i]\|\leq\varepsilon$. The same proof applies to $T^*$, hence $T_G\in Q(G,st\circ d^{\omega})$.\qedhere
\end{itemize}\end{proof}

Now we are ready to prove Theorem 1 (1).

\begin{theorem}

Let $Ghost(X)$ be the set of ghost operators on $B(\ell^2(X))$ and $A$ a $C^*$-subalgebra of $Q(X)$. 

\begin{enumerate}
\item $Ghost(X)\cap A$ is a two-sided closed ideal in $A$.
\item An operator $T\in A$ is a generalized Fredholm operator with respect to the ghost ideal in $A$ if and only if its universal limit operator $T_2$ is invertible.
\item If $A$ is also a $C^*$-subalgebra of $Q(X,d)$ where $d$ is a ULF metric on $X$, then $T\in A$ is a generalized Fredholm operator with respect to the ghost ideal in $A$ if and only if there exists $M>0$ such that for any afar galaxy $G$, $T_G$ is invertible and $\|T_G^{-1}\|<M$.
\end{enumerate}

\end{theorem}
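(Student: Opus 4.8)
The plan is to route all three parts through the map $\pi\colon Q(X)\to B(H_2)$ sending $T$ to its universal limit operator $T_2$. Since $T^\omega$ is block diagonal with respect to $H_1\oplus H_2\oplus H_3$ for every $T\in Q(X)$ and $T\mapsto T^\omega$ is a $*$-homomorphism, compressing to the $H_2$-block makes $\pi$ a $*$-homomorphism. For part (1) I would invoke the characterization already recorded in the excerpt: for $T\in Q(X)$ one has $T\in Ghost(X)$ if and only if $T_2=0$. As $A\subseteq Q(X)$, this says precisely that $Ghost(X)\cap A=\ker(\pi|_A)$, and the kernel of a $*$-homomorphism of $C^*$-algebras is automatically a norm-closed two-sided ideal, which settles (1).

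For part (2) I would unwind the definition: $T\in A$ is generalized Fredholm with respect to $Ghost(X)\cap A$ exactly when $T+(Ghost(X)\cap A)$ is invertible in $A/(Ghost(X)\cap A)$. The first isomorphism theorem identifies this quotient with the $C^*$-subalgebra $\pi(A)\subseteq B(H_2)$, sending the class of $T$ to $T_2$, so generalized Fredholmness is equivalent to invertibility of $T_2$ inside $\pi(A)$. Since $A$ contains the identity operator $I$ (as do all the Roe-like algebras of interest) and $\pi(I)=\mathrm{id}_{H_2}$, the algebras $\pi(A)$ and $B(H_2)$ share a unit, so spectral permanence for $C^*$-algebras promotes invertibility in $\pi(A)$ to invertibility in $B(H_2)$; hence $T$ is generalized Fredholm if and only if $T_2$ is invertible on $H_2$. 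For non-unital $A$ the same argument runs in the unitization $A+\mathbb{C}I$.

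Part (3) is where the uniform bound is forced out, and I expect it to be the only genuine obstacle. By (2) it suffices to show that, when $A\subseteq Q(X,d)$ so that $T_2=\bigoplus_{G\in AG(X^\omega)}T_G$ is block diagonal along the afar galaxies (using the decomposition $H_2=\bigoplus_{G}\ell^2(G)$ of the excerpt), invertibility of $T_2$ is equivalent to invertibility of every fiber $T_G$ together with a uniform bound $\sup_G\|T_G^{-1}\|<\infty$. For the forward direction each $\ell^2(G)$ reduces $T_2$, so its projection commutes with $T_2$ and hence with $T_2^{-1}$; thus $T_2^{-1}$ is again block diagonal with $G$-block $T_G^{-1}$, giving invertibility of each $T_G$ and $\|T_G^{-1}\|\le\|T_2^{-1}\|=:M$ for all $G$. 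Conversely, given such an $M$, assembling the $T_G^{-1}$ into $\bigoplus_G T_G^{-1}$ produces a bounded operator of norm $\le M$ inverting $T_2$. The hypothesis that there is some $M>0$ with $\|T_G^{-1}\|<M$ for all afar $G$ is exactly this uniform boundedness. The crux — and the reason the bound cannot be dropped at this generality — is that without it the formal fiberwise inverse $\bigoplus_G T_G^{-1}$ need not be bounded, so invertibility of each fiber alone does not recover invertibility of $T_2$.
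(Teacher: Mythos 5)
Your proposal is correct and follows essentially the same route as the paper: parts (1) and (2) via the observation that $Ghost(X)\cap A$ is the kernel of the $*$-homomorphism $T\mapsto T_2$, and part (3) via the block-diagonal decomposition $T_2=\bigoplus_{G\in AG(X^\omega)}T_G$, with invertibility of a direct sum equivalent to fiberwise invertibility plus a uniform bound on the inverses. You simply make explicit the details the paper leaves implicit (the quotient identification, spectral permanence in $B(H_2)$, and the unitization in the non-unital case), all of which are sound.
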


\begin{proof}
Since the kernel of the $*$-homomorphism $T \mapsto T_2$ is $Ghost(X)\cap A$, we have (1) and (2). For (3), if $A$ is also a $C^*$-subalgebra of $Q(X,d)$, $T_2$ is block diagonal, i.e., $T_2=\oplus _{G\in AG(X^\omega)}T_G$. Hence $T_2$ is invertible if and only if there exists $M>0$ such that for all afar galaxy $G$, $T_G$ is invertible and $\|T_G^{-1}\|<M$..
\end{proof}

%When $(X,d)$ has Yu's Property A, an operator $T\in C^*_u(X)$ is a ghost operator if and only if it is compact. So in that case $T$ is Fredholm in the usual sense if and only if there exists $M>0$ such that for all afar galaxy $G$, $T_G$ is invertible with $\|T_G^{-1}\|<M$.

\section{Hyper-principal map on ultrapowers of ULF metric spaces}

This section is devoted to the relationship between our setting and that in \cite{SpW}.
Let $(X,d)$ be a ULF metric space, and $\beta X$  the Stone-\v{C}ech compactification of $X$. For $x,y \in \beta X$, define $x\sim y$ if there is a function $f:X\to X$ such that $\sup\{d(p,f(p))\mid p\in X\}<\infty$ and its continuous extension $\bar{f}:\beta X \to \beta X$ maps $x$ to $y$. 

\begin{prop}

Let $(X,d)$ be a ULF metric space and $f:X\to X$ a function such that $\sup\{d(p,f(p))\mid p\in X\}<\infty$. Let $U$ be an ultrafilter on $X$, then there exists $Y\in U$ such that $f|_Y$ is injective.

\end{prop}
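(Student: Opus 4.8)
The plan is to exploit the bounded displacement of $f$ together with the ULF hypothesis to show that $f$ is finite-to-one with a \emph{uniform} bound on fiber sizes, and then to partition $X$ into finitely many pieces on each of which $f$ is injective; the ultrafilter property will then single out one such piece.

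First I would set $R:=\sup\{d(p,f(p))\mid p\in X\}$, which is finite by hypothesis. For any $z\in X$ and any $p\in f^{-1}(z)$ we have $d(p,z)=d(p,f(p))\le R$, so the fiber $f^{-1}(z)$ is contained in $B(z,R+1)$. Since $(X,d)$ is ULF there is $N\in\mathbb{N}$ with $\#B(x,R+1)\le N$ for all $x\in X$, whence $\#f^{-1}(z)\le N$ for every $z$. Thus $f$ is at most $N$-to-one, with the bound $N$ uniform over all fibers.

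Next I would produce a finite partition $X=Y_1\sqcup\cdots\sqcup Y_N$ such that $f|_{Y_c}$ is injective for each $c$. Fixing a well-ordering $\prec$ of $X$, I assign to each $p\in X$ the color $c(p):=\#\{q\in f^{-1}(f(p))\mid q\prec p\}+1\in\{1,\dots,N\}$, that is, the rank of $p$ inside its own fiber. By construction two distinct points of the same fiber receive distinct colors, so each color class $Y_c:=c^{-1}(\{c\})$ meets every fiber in at most one point; equivalently $f|_{Y_c}$ is injective. Using the rank rather than an arbitrary per-fiber labeling keeps the construction clean and invokes choice only through the well-ordering.

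Finally, since $U$ is an ultrafilter and $X$ is the disjoint union of the finitely many sets $Y_1,\dots,Y_N$, exactly one of them lies in $U$; call it $Y$. Then $f|_Y$ is injective, as required. The only real content is the first step — translating the metric and geometric hypotheses into the uniform fiber bound — after which the coloring is a routine disjoint-union-of-cliques argument and the ultrafilter step is immediate. I do not anticipate a genuine obstacle beyond being careful that $N$ is chosen uniformly over all fibers.
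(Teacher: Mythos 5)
Your proof is correct and follows essentially the same route as the paper: both arguments use the bounded displacement plus the ULF bound to split $X$ into at most $N$ classes on each of which $f$ is injective, and then let the ultrafilter select one class. The only (cosmetic) difference is the coloring device — you rank each point within its fiber $f^{-1}(f(p))$ via a well-ordering, whereas the paper tags each $y$ by its index in a fixed enumeration of the ball $\overline{B(f(y),R)}$ — and both devices work equally well.
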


\begin{proof}
Let $\sup\{d(p,f(p))\mid p\in X\}=R$, and $N=\sup\{\#\overline{B(x,R)}\mid x\in X\}$. For every $x\in X$, enumerate $\overline{B(x,R)}$ as $x_1,x_2,\cdots,x_{N_x}$. Then we have $N_x\leq N$. For any $y\in X$, let $x=f(y)$, then $y=x_n$ for some $1\leq n \leq N_x$. Let $Y_n=\{y\in X\mid y=x_n\text{ for }x=f(y)\}$, then $\cup _{1\leq n\leq N}Y_n=X$. Thus there exists $n$ such that $Y_n\in U$. Since $f|_{Y_n}^{-1}(x)=x_n$ for $x$ in the image of $f|_{Y_n}$, we conclude that $f|_{Y_n}$ is injective .
\end{proof}

Proposition 5 shows that we can additionally demand that $f$ is injective on some $Y\in U$, which is in the original setting of \cite{SpW}. Since $f(Y)\in \bar{f}(x)$ (see Proposition 6 below) , the function $f^{-1}:f(Y)\to Y$ shows that $y\sim x$, i.e., the relation $\sim$ is an equivalent relation on $\beta X$. A limit space defined in \cite{SpW} is an equivalent class under $\sim$, with pseudo-metric $d(x,y)=\lim_{p\to x} d(p,f(p))$, independent of the choice of $f$ whose extension maps $x$ to $y$. For $U \in \beta X$, denote $X_U$ the limit space where $U$ dwells.  

For a discrete topological space $X$, $\beta X$ can be realized as the set of all ultrafilters on $X$, with topology generated by clopen sets $O_A:=\{U\in\beta X\mid A\in U\}$ with $A \subset X$. Define the hyper-principal map $\pi:X^\omega \to \beta X$ by $\pi (x):=\{A \subset X\mid x\in A^\omega\}$. The following proposition gives alternative descriptions of the hyper-principal map.

\begin{prop}

For a function $x:I\to X$, the following elements in $\beta X$ coincide:

\begin{enumerate}
\item $\pi([x_i])$, the image of $[x_i]\in X^\omega$ under the hyper-principal map;
\item $x_*(\omega)=\{A\subset X\mid x^{-1}(A)\in \omega\}$, the push forward of $\omega$ by the function $x$;
\item $\bar{x}(\omega)$, where $\bar{x}$ is the continuous extension of $x:I\to X$ to $\beta I \to \beta X$.\qed

\end{enumerate}

\end{prop}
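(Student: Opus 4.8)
The plan is to show all three descriptions of the relevant ultrafilter on $X$ coincide by exploiting the universal property that characterizes $\beta X$ as the maximal ideal space of $\ell^\infty(X)$, or equivalently the universal compactification into which $X$ maps and through which every map of $X$ into a compact Hausdorff space factors. Since $\beta X$ is the set of ultrafilters on the discrete space $X$, all three candidates are ultrafilters on $X$, so it suffices to verify they contain exactly the same subsets $A \subset X$.

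First I would unwind the definitions and observe that the equivalence of (1) and (2) is essentially tautological. By definition $\pi([x_i]) = \{A \subset X \mid [x_i] \in A^\omega\}$. The key observation is the basic ultrapower fact that $[x_i] \in A^\omega$ holds if and only if $\{i \in I \mid x_i \in A\} \in \omega$, which by \L o\'s' theorem is simply the statement that "membership in $A$" transfers to the ultrapower. But $\{i \mid x_i \in A\} = x^{-1}(A)$, so $[x_i] \in A^\omega \iff x^{-1}(A) \in \omega$, which is precisely the defining condition for $A \in x_*(\omega)$. Hence $\pi([x_i]) = x_*(\omega)$ as ultrafilters, giving (1)$=$(2).

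Next I would handle (2)$=$(3). Here the input is the universal property of $\beta I$: the function $x: I \to X \subset \beta X$ extends uniquely to a continuous map $\bar x: \beta I \to \beta X$, and one wants to evaluate it at the point $\omega \in \beta I$ (viewing $\omega$ as an ultrafilter on $I$). The clean way is to use the explicit description of the Stone-\v{C}ech extension on the level of ultrafilters: for an ultrafilter $\mathcal{F}$ on $I$, the extension $\bar x(\mathcal{F})$ is the ultrafilter $\{A \subset X \mid x^{-1}(A) \in \mathcal{F}\}$ on $X$, i.e.\ exactly the push-forward $x_*(\mathcal{F})$. I would verify this by checking that $x_*(\omega)$ is the unique ultrafilter to which the net (indexed by $\omega$) of point-images $x_i$ converges in $\beta X$: for a basic clopen neighborhood $O_A$ of a candidate limit, convergence along $\omega$ means $\{i \mid x_i \in A\} \in \omega$, i.e.\ $x^{-1}(A) \in \omega$, i.e.\ $A \in x_*(\omega)$. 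By continuity $\bar x(\omega)$ is precisely this limit point, so $\bar x(\omega) = x_*(\omega)$, giving (2)$=$(3).

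The one genuine point requiring care, and the step I expect to be the main obstacle, is justifying the formula for the Stone-\v{C}ech extension of a map between discrete spaces at the level of ultrafilters, namely that $\bar x(\mathcal F) = x_*(\mathcal F)$. Although standard, this needs the correct interpretation of "evaluating the continuous extension at an ultrafilter," which relies on identifying points of $\beta I$ with ultrafilters and points of $\beta X$ likewise, and on the fact that $x_*(\mathcal F)$ is indeed an ultrafilter (not merely a filter) because $x^{-1}$ preserves complements and finite intersections and $\mathcal F$ is an ultrafilter. I would dispatch this by a short topological argument: the map $\mathcal F \mapsto x_*(\mathcal F)$ from $\beta I$ to $\beta X$ is continuous (the preimage of $O_A$ is $O_{x^{-1}(A)}$, which is clopen) and it agrees with $x$ on the dense subset of principal ultrafilters $I \hookrightarrow \beta I$; by uniqueness of continuous extensions into the Hausdorff space $\beta X$, it must coincide with $\bar x$. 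Chaining these identifications closes the proof.
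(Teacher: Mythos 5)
The paper offers no proof of this proposition at all---it is stated with a \qed, signalling that the authors regard it as an immediate unwinding of definitions---so there is no official argument for your write-up to diverge from; what you have done is correctly fill in the routine verification. Your proof is complete: the equivalence of (1) and (2) is exactly the observation that $[x_i]\in A^\omega \iff \{i\in I\mid x_i\in A\}=x^{-1}(A)\in\omega$, and for (2)${}={}$(3) your argument---that $\mathcal{F}\mapsto x_*(\mathcal{F})$ is continuous because the preimage of the basic clopen set $O_A$ is $O_{x^{-1}(A)}$, that it agrees with $x$ on the dense subset of principal ultrafilters (since $x_*$ of the principal ultrafilter at $i$ is the principal ultrafilter at $x(i)$), and that it therefore coincides with $\bar{x}$ by uniqueness of continuous extensions into the Hausdorff space $\beta X$---is the standard and correct justification, and you also correctly dispatch the one genuinely checkable point, namely that $x_*(\mathcal{F})$ is an ultrafilter and not merely a filter.
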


We need the following proposition (see, for example, Theorem 1.3.10  in \cite{Go22}) for the next theorem.

\begin{prop}
Let $U$ be an ultrafilter on the set $S$. If $\bar{f}(U)=U$ for a function $f:S\to S$, then there exists $Y\in U$ such that $f|_Y$ is the identity map on $Y$.
\end{prop}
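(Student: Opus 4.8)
The plan is to identify the desired set $Y$ with the fixed-point set of $f$ and to show that this set lies in $U$ by a coloring argument. First I would unwind the hypothesis: the continuous extension $\bar{f}$ sends an ultrafilter $U$ to its pushforward $f_*(U)=\{A\subseteq S\mid f^{-1}(A)\in U\}$, so the equation $\bar{f}(U)=U$ is equivalent to the statement that for every $A\subseteq S$ one has $A\in U$ if and only if $f^{-1}(A)\in U$. In particular, $A\in U$ implies $f^{-1}(A)\in U$, which is the only direction I will need.

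Write $Y:=\{x\in S\mid f(x)=x\}$, so that $f|_Y$ is automatically the identity; it then suffices to prove $Y\in U$. Arguing by contradiction, suppose $Y\notin U$, so that $Z:=S\setminus Y\in U$, and note that $f(x)\neq x$ for every $x\in Z$. The heart of the argument is a combinatorial partition lemma: I would show that $Z$ splits into three disjoint pieces $Z=Z_1\sqcup Z_2\sqcup Z_3$ with $f(Z_k)\cap Z_k=\emptyset$ for each $k$, equivalently $Z_k\cap f^{-1}(Z_k)=\emptyset$. Granting this, since $Z\in U$ and $U$ is an ultrafilter, some $Z_k$ lies in $U$; fixing that $k$ and applying the implication $A\in U\Rightarrow f^{-1}(A)\in U$ gives $f^{-1}(Z_k)\in U$, whence $Z_k\cap f^{-1}(Z_k)\in U$ is nonempty, contradicting the disjointness built into the partition. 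This contradiction forces $Y\in U$ and completes the proof.

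The main obstacle is establishing the partition lemma, namely that the functional graph of a fixed-point-free function is properly $3$-colorable, the three color classes being the $Z_k$. I would set up the graph $H$ on vertex set $Z$ with an edge $\{x,f(x)\}$ whenever $x\in Z$ and $f(x)\in Z$; the absence of fixed points on $Z$ guarantees that $H$ has no loops. Because $f$ is a function, each vertex of $H$ has out-degree at most one, so every connected component has at most as many edges as vertices and is therefore a tree or unicyclic; such a component is $3$-colorable (a tree needs two colors, an odd cycle with pendant trees needs three), and hence every finite subgraph of $H$ is $3$-colorable. The de Bruijn--Erd\H{o}s theorem then upgrades this to a proper $3$-coloring of all of $H$, yielding the required $Z_1,Z_2,Z_3$. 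The one point to keep in mind is that $f$ need not map $Z$ into $Z$, but this causes no difficulty: a vertex $x$ with $f(x)\notin Z$ imposes no constraint, and the edge set above already records exactly the constraints the argument uses.
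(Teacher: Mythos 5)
Your proof is correct and is essentially the same argument as the one the paper relies on: the paper itself gives no proof, citing Theorem 1.3.10 of \cite{Go22}, and the standard proof behind that citation is exactly your route --- identify $\bar f(U)$ with the pushforward $f_*(U)$, reduce to the three-set partition lemma for fixed-point-free maps (the ``same lemma as in \cite[Lemma 3.5]{SpW}'' that the paper explicitly mentions), and conclude by ultrafilter pigeonhole, since some $Z_k\in U$ would force $\emptyset = Z_k\cap f^{-1}(Z_k)\in U$. Your de Bruijn--Erd\H{o}s compactness proof of the partition lemma (components of the functional graph contain at most one cycle, hence finite subgraphs are $3$-colorable) is a correct, self-contained way to supply that lemma, including the point that vertices with $f(x)$ a fixed point impose no coloring constraint.
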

 
Note that the proof of the above proposition uses the same lemma as in \cite[Lemma 3.5]{SpW}, and can be used to prove \cite[Lemma 3.4]{SpW}, which is the key lemma to define the metric on limit spaces. 

Now we are ready to prove the main theorem of this section.

\begin{theorem}

Let $G\in AG(X^\omega)$. Then $\pi(G)$ is a limit space in $\beta X$, and $\pi |_G: G\to \pi(G)$ is a bijective isometry.

\end{theorem}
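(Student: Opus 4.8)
The plan is to fix a representative $[x_i]\in G$ and set $U:=\pi([x_i])=x_*(\omega)\in\beta X$ (using the description in Proposition 6), and then show that $\pi(G)$ is exactly the limit space $X_U$, with $\pi|_G$ an injective isometry onto it. The linchpin is a construction lemma: \emph{for every $[y_i]\in G$ there is a bounded-displacement map $f:X\to X$ with $f(x_i)=y_i$ for $\omega$-almost all $i$.} Granting this, the pushforward description of the Stone--\v{C}ech extension gives $\bar f(U)=f_*(x_*\omega)=(f\circ x)_*\omega=y_*\omega=\pi([y_i])$, so $\pi([y_i])\sim U$ and hence $\pi(G)\subseteq X_U$.

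To prove the construction lemma, note that since $[x_i]\approx_F[y_i]$ there is a real $R$ with $d(x_i,y_i)\le R$ on a set $I'\in\omega$; put $N:=\sup_{x}\#\overline{B(x,R)}$, finite by ULF. Partition $I'$ into fibers $F_p=\{i\in I':x_i=p\}$; on each $F_p$ the values $y_i$ lie in $T_p:=\{y_i:i\in F_p\}\subseteq\overline{B(p,R)}$, a set of size $\le N$. Fixing an enumeration of each $T_p$ and letting $g(i)\in\{1,\dots,N\}$ record the index of $y_i$ inside $T_{x_i}$, the pigeonhole principle for the ultrafilter yields a single $k_0$ with $g^{-1}(k_0)\in\omega$. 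Defining $f(p)$ to be the $k_0$-th element of $T_p$ (and $f(p)=p$ wherever this is undefined) produces a map of displacement $\le R$ with $f(x_i)=y_i$ on $g^{-1}(k_0)\in\omega$. I expect this to be the main obstacle: the whole argument hinges on the fibers $T_p$ being \emph{uniformly finite}, which is precisely where bounded geometry enters, and without ULF the finite-range pigeonhole fails.

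For surjectivity I would run the construction in reverse: given $V\in X_U$, pick a bounded-displacement $f$ with $\bar f(U)=V$ and set $y_i:=f(x_i)$; then $d(x_i,y_i)\le\sup_p d(p,f(p))<\infty$ forces $[y_i]\in G$, while $\pi([y_i])=(f\circ x)_*\omega=\bar f(U)=V$. Combined with the previous paragraph this gives $\pi(G)=X_U$, so $\pi(G)$ is indeed a limit space. For injectivity, suppose $[x_i],[y_i]\in G$ with $\pi([x_i])=\pi([y_i])=U$. The construction lemma supplies $f$ with $f(x_i)=y_i$ for $\omega$-almost all $i$ and $\bar f(U)=U$; Proposition 7 then provides $Y\in U$ with $f|_Y=\mathrm{id}$, and intersecting $\{i:x_i\in Y\}\in\omega$ with $\{i:f(x_i)=y_i\}\in\omega$ forces $x_i=y_i$ $\omega$-almost everywhere, i.e.\ $[x_i]=[y_i]$.

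Finally, isometry follows from the identity $\lim_{p\to U}h(p)=\lim_{i\to\omega}h(x_i)$ for bounded $h$, valid because $U=x_*\omega$. Taking $h(p)=d(p,f(p))$ with $f$ from the construction lemma (so that $f(x_i)=y_i$ $\omega$-almost everywhere), the limit-space distance computes as $\lim_{p\to U}d(p,f(p))=\lim_{i\to\omega}d(x_i,f(x_i))=\lim_{i\to\omega}d(x_i,y_i)=st\circ d^\omega([x_i],[y_i])$, which is exactly the galaxy pseudo-metric on $G$; the independence of the left-hand value from the choice of $f$ is guaranteed by the definition of the limit-space metric, so no ambiguity arises.
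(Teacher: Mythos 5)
Your proposal is correct and follows essentially the same route as the paper's proof: the same ULF-plus-ultrafilter pigeonhole construction of a bounded-displacement map $f$ with $f(x_i)=y_i$ for $\omega$-almost all $i$ (the paper enumerates the balls $B(x,R)$ directly rather than your hit-sets $T_p$, a cosmetic difference), the same appeal to Proposition 7 for injectivity, and the same pushforward computations $\bar f(x_*\omega)=(f\circ x)_*\omega$ for surjectivity and for the isometry identity $\lim_{p\to U}d(p,f(p))=\lim_{i\to\omega}d(x_i,y_i)=st\circ d^\omega([x_i],[y_i])$.
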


\begin{proof}

Let $[x_i],[y_i]\in X^\omega$ be in the same galaxy, i.e., for some positive real number $R$, $d^\omega([x_i],[y_i])<R$ . 

We first prove that $\pi([x_i])\sim \pi([y_i])$. Let $N\in \mathbb{N}$ such that $\#B(x,R)<N$ for all $x\in X$. For each $x\in X$, fix an enumeration of $B(x,R)$ as $x_1,x_2,\cdots,x_{N_x}$. Since $d^\omega([x_i],[y_i])<R$, for $\omega$-almost all $i$'s, $y_i=x_{i,n_i}$ for some $1\leq n_i \leq N_x\leq N$. Let $n=[n_i]$, then $n\in \mathbb{N}$, and $y_i=x_{i,n}$ for $\omega$-almost all $i$'s. Define $f:X\to X$ by mapping $x$ to $x_n$, then there is $J\in\omega$ and $f(x_i)=y_i$ for $i\in J$. Thus $\bar{f}(\pi([x_i]))=\bar{f}\bar{x}(\omega)=\overline{f\circ x}(\omega)=\bar{y}(\omega)=\pi([y_i])$, hence $\pi([x_i])\sim\pi([y_i])$. This shows that $\pi(G)$ is contained in a limit space in the sense of \cite{SpW}. 

To prove that $\pi|_G$ is injective, let $\pi([x_i])=\pi([y_i])$. Since $\bar{f}(\pi([x_i]))=\pi([y_i])$, there exists $K\in\pi([x_i])$ such that $f|_K=id_K$. Hence for $i\in J\cap x^{-1}(K)$, $x_i=y_i$, hence $[x_i]=[y_i]$. 

To see that $\pi |_G$ is an isometry, we directly check
$$\lim\limits_{x\to \pi([x_i])}d(x,f(x))=\lim\limits_{i\to \omega}d(x_i,y_i)=st\circ d^\omega([x_i],[y_i]),$$
where $f$ is as above, satisfying $f(x_i)=y_i$ for $i\in J$ for some $J\in\omega$.

Now we prove that $\pi(G)$ is a whole limit space. Let $[x_i]\in X^\omega$. For any $U\in X_{\pi([x_i])}$, there exists $f:X \to X$ such that $\sup\{d(p,f(p))\mid p\in X\}<\infty$ and $\bar{f}(\pi([x_i]))=U$. Then
$$U=\bar{f}(\pi([x_i])=\bar{f}\circ\bar{x}(\omega)=\overline{f\circ x}(\omega)=\pi([f(x_i)]).$$
Hence $U$ has a pre-image in $G$.
\end{proof}

%Remark that ULF is essentially used in the proof, and the proof of injectivity fails if $[x_i]$ and $[y_i]$ are not in the same galaxy. 

It is straightforward to check that if $I=X$, $\omega\in\beta X$, then $[id]\in X^\omega$, where $id$ is the identity map on $X$, and $\pi$ maps the galaxy where $[id]$ dwells to $X_\omega$, the limit space associated to $\omega$.

Fix $T\in C^*_u(X)$. For $x,y\in X^\omega$ with $d^\omega(x,y)$ finite, let $x=[x_i]$ and $y=[y_i]$. Then we have $st\circ k_T^{\omega}(x,y)=\lim_{i\to \omega}k_T(x_i,y_i)=\bar{k}_{T}(\pi(x,y))$, here we abuse a little bit the notation of $\pi$ as the hyper-principal map from $(X\times X)^\omega \to \beta(X\times X)$. Note that $\bar{p}_1((\pi(x,y))=\pi(x)$ and $\bar{p}_2((\pi(x,y))=\pi(y)$, where $p_i$ is the projection of $X\times X$ onto its $i$'th coordinate. Since $X$ is ULF, the canonical quotient map $\beta(X\times X)\to \beta X\times \beta X$ restricts to a homemorphism on $\overline E$, where $E$ is an entourage \cite{Roe2003}. Thus, $\bar{k}_{T}(\pi(x,y))$ only depends on $\pi(x)$ and $\pi(y)$, which is the value of kernel function of the limit operator at $(\pi(x),\pi(y))$ in \cite{SpW}. This shows that the two definitions of limit operators also coincide.

If $X$ is strongly discrete as in the original setting of \cite{SpW}, then $d^\omega(x,y)$ is an infinitesimal implies that it is actually zero. In this case, $(G,st \circ d^\omega)$ is a metric space, thus $\pi$ gives rise to an isometry of metric spaces between a galaxy and its corresponding limit space.

\begin{remark}

In contrast with the good behavior of $\pi|_G$, the hyper-principal map $\pi$ is not necessarily injective, nor necessarily surjective (see, for example, \cite[Chapter 9]{Go22}). Thus, a limit space might have no copies, or many copies in $X^\omega$. Recall that the \emph{Rudin-Keisler order} is a partial pre-order on the class of ultrafilters, such that for ultrafilters $U$ on $I$ and $V$ on $J$, $U\leq _{RK} V$ if and only if there exists a function $f: J\to I$ such that $\bar{f}(V)=U$. Then by definition, $U$ is in the image of $\pi$ if and only if $U\leq _{RK} \omega$. Hence, we can choose $\omega$ to be Rudin-Keisler greater than any ultrafilter on $X$ (though $I$ cannot have the same cardinal as $X$ in this case) to make $\pi$ surjective, i.e., every limit space is contained in $X^\omega$. On the other hand, if we choose a countably incomplete ultrafilter that is not the case (for example, $I=X$), then there will be some limit spaces without any copies as galaxies, which means that our theorem improves the result in \cite{SpW} by requiring invertibility of fewer limit operators.

\end{remark}

\section{Property A and uniform boundedness of $T_G^{-1}$}

In this section, we prove Theorem 1 (2). Let $(X,d)$ be a ULF metric space with Yu's Property A. Fix a $p \in X$ and an operator $T\in C^*_u(X)$. Following \cite{SpW}, let $\nu (T)$ be the lower norm of $T$, i.e., $\nu (T):=\inf \{\|Tv\| \mid \|v\|=1\}$. Fix $w\in \overline{\{\nu (T_G)\mid G\in AG(X^\omega) \}}$. We wish to prove that $w\in\{\nu (T_G)\mid G\in AG(X^\omega) \}$, that is, $\{\nu (T_G)\mid G\in AG(X^\omega) \}$ is closed. This is a consequence of the following result.

\begin{prop}

There is an infinite hyperreal $R$ and a sequence of real numbers $(r_n)_{n\in \mathbb{N}}$ tending to infinity such that there exists $x\in X^{\omega}$ such that 

\begin{enumerate}

\item $d^{\omega}(x,p)\geq R$, and
\item For all $n\in \mathbb{N}$, $|\nu (T^{\omega}P_{B(x,r_n)})-w|\leq 1/2^n$.

\end{enumerate}

\end{prop}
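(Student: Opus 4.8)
The plan is to obtain $x$ by $\aleph_1$-saturation (countable saturation) of the countably incomplete ultrapower $X^\omega$, once the sequence $(r_n)$ and the bound $R$ have been fixed in advance. Throughout, write $\ell_r(\xi):=\nu\big(T^\omega P_{B(\xi,r)}\big)$ for $\xi\in X^\omega$ and real $r>0$. Exactly as in the proof of Proposition 3, with operator norms replaced by lower norms, $\ell_r$ is the standard part of the ultrapower of the genuine function $x\mapsto\nu(TP_{B(x,r)})$ on $X$; that is, $\ell_r([x_i])=st\big(\lim_{i\to\omega}\nu(TP_{B(x_i,r)})\big)$. Two elementary monotonicity facts will be used repeatedly: enlarging the ball enlarges the space of competing unit vectors, so $r\mapsto\ell_r(\xi)$ is non-increasing; and if $\xi$ lies in the afar galaxy $G$ then $B(\xi,r)\subseteq G$, whence $\ell_r(\xi)\ge\nu(T_G)$ with $\ell_r(\xi)\downarrow\nu(T_G)$ as $r\to\infty$ (finitely supported vectors being dense in $\ell^2(G)$). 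In particular the lower estimate in clause $(2)$ comes for free, and the entire content of the proposition lies in the upper estimate at the prescribed scales.

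The type is set up as follows. Since $w\in\overline{\{\nu(T_G)\mid G\in AG(X^\omega)\}}$, choose afar galaxies $G_n$ with $|\nu(T_{G_n})-w|<2^{-n-3}$ and (provisionally) base points $\xi_n\in G_n$; put $D_n:=d^\omega(\xi_n,p)$, an infinite positive hyperreal because $G_n$ is afar. By the countable-saturation principle recalled in Section 2.1, the countable set $\{D_n\mid n\in\mathbb N\}$ admits an infinite lower bound $R$. Now consider, in the single variable $x$, the countable type whose $n$-th formula is
$$\tau_n(x)\colon\quad d^\omega(x,p)\ge R\ \ \wedge\ \ w-2^{-n}\le \nu\big(TP_{B(x,r_n)}\big)\le w+2^{-n}.$$
Every $\tau_n$ is an internal condition, its only parameters being $R$, $w$, $r_n$, $2^{-n}$ and the internal function $\nu(TP_{B(\cdot,r_n)})$. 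As $X^\omega$ is $\aleph_1$-saturated, it suffices to prove that $\{\tau_n\}_{n\in\mathbb N}$ is finitely satisfiable; any realizing $x$, together with $R$ and $(r_n)$, then proves the proposition, the conjunction of the distance clauses forcing $d^\omega(x,p)\ge R$.

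For finite satisfiability I will show that $x:=\xi_N$ realizes $\tau_1\wedge\cdots\wedge\tau_N$. The distance clause is clear, since $d^\omega(\xi_N,p)=D_N\ge R$. For $m\le N$ the lower clause holds because $\ell_{r_m}(\xi_N)\ge\nu(T_{G_N})>w-2^{-N-3}\ge w-2^{-m}$. Hence everything reduces to arranging $\ell_{r_m}(\xi_N)\le w+2^{-m}$ for all $m\le N$, and this is where Property A enters and constitutes the main obstacle. A priori the rate at which $\ell_r(\xi_n)\downarrow\nu(T_{G_n})$ depends on the galaxy $G_n$, so no single scale $r_m$ can simultaneously localize all the $G_n$; what is needed is a \emph{uniform} localization, valid across the countable family of galaxies at once. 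The plan is to import precisely this from \cite[Proposition 7.6]{SpW}, whose proof uses Property A essentially: it allows the base points $\xi_n$ and an increasing sequence $r_n\to\infty$ to be chosen so that $\ell_{r_m}(\xi_n)\le\nu(T_{G_n})+2^{-m-2}$ for all $m\le n$.

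Granting this, for $m\le N$ one gets $\ell_{r_m}(\xi_N)\le\nu(T_{G_N})+2^{-m-2}<w+2^{-N-3}+2^{-m-2}\le w+2^{-m}$, which closes the finite-satisfiability step and hence, by saturation, the proof. The delicate point—and the one I expect to require the full strength of Property A through \cite{SpW}—is exactly the uniformity of the localization radius over all afar galaxies, i.e.\ that the near-minimizing vectors for the limit operators $T_{G_n}$ can be captured, to accuracy $2^{-m-2}$, inside a ball of a radius $r_m$ that does not depend on $n$. Once this uniform localization is in hand, the monotonicity bookkeeping for the lower bound and the $\aleph_1$-saturation step are routine.
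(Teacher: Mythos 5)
Your overall architecture is sound, and it is in fact the paper's own strategy in disguise: the paper isolates the finite-satisfiability statement as its Lemma 1 and then realizes the resulting countable type by an explicit diagonal argument over the countably incomplete ultrapower (its proof of Proposition 8), which is exactly the $\aleph_1$-saturation step you invoke abstractly; the paper even remarks that its diagonal proofs can be converted back into saturation arguments. Your monotonicity bookkeeping for $\ell_r$, the free lower bound $\ell_r(\xi)\ge \nu(T_{G_n})$, and the choice of the infinite bound $R$ via countable saturation all match the paper. The problem is that the entire analytic content of the proposition is concentrated in the step you introduce with ``granting this,'' and \cite[Proposition 7.6]{SpW} does not by itself deliver it. Two things are missing. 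First, that proposition concerns the operator $T$ and finite subsets of $X$; to apply it to the limit operators $T_{G_n}$ one must transfer it to finite subsets of galaxies in $X^\omega$ with the \emph{same} localization radius $s(\delta)$. The paper does this by an ultraproduct argument (its Propositions 9, 11 and 12: write $F=[F_i]$, apply 7.6 to each $F_i$, set $Y=[Y_i]$, and use that truncated lower norms commute with ultraproducts), preceded by a first truncation of $T_{G_n}$ to a finite set.

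Second, and more seriously, 7.6 produces for each tolerance $\delta_m$ \emph{some} subset of diameter at most $s_m$, but the subsets obtained at different scales bear no relation to one another --- in particular they need not share a center --- so nothing in your argument yet yields a single point $\xi_n$ with $\ell_{r_m}(\xi_n)\le \nu(T_{G_n})+2^{-m-2}$ simultaneously for all $m\le n$. The paper closes this by an iterated, \emph{nested} application: starting from a finite $F\subset G_n$ with $\nu(T_{G_n}P_F)$ close to $\nu(T_{G_n})$, it builds a chain $Y_0\subset Y_1\subset\cdots\subset Y_n\subset G_n$ with $\mathrm{diam}(Y_k)\le s_k\le r_k$ and telescoping increments $0\le \nu(T_{G_n}P_{Y_k})-\nu(T_{G_n}P_{Y_{k+1}})<2^{-k-1}$, so that any $x_n\in Y_0$ satisfies $Y_k\subseteq B(x_n,r_k)$ for every $k\le n$, whence $0\le \nu\bigl(T_{G_n}P_{B(x_n,r_k)}\bigr)-\nu(T_{G_n})<2^{-k}-2^{-n-1}$ by monotonicity of the lower norm under enlarging the support. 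This nesting-and-telescoping construction is precisely the paper's Lemma 1, and it is where Property A is actually spent; without it your finite-satisfiability step is asserted rather than proved. (A minor further wrinkle: if the base points are re-chosen as $x_n\in Y_0$, the distances $d^\omega(x_n,p)$ change by finite amounts, so $R$ should be taken as an infinite lower bound of the new distances, or halved --- easy to fix, but it should be said.)
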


The proposition tells that there is an afar element $x$, such that the lower norm of $T^\omega$ truncated by a finite radius ball centered at $x$ tends to $w$ as the radius tends to infinity. It is straightforward that, if the above proposition is true, i.e., there is an $x$ satisfying the proposition, then $v(T_G)=w$, where $G$ is the galaxy in which $x$ dwells. 

Note that the proposition gives countably many conditions about an element in $X^\omega$, and states that there is an $x$ satisfying all the conditions. Inspired by countable saturation of ultrapower, to prove the proposition, one only needs to show that any finite set of the conditions can be satisfied simultaneously.

\begin{lemma}

There is an infinite hyperreal $R$ and a sequence of real numbers $(r_n)_{n\in \mathbb{N}}$ tending to infinity, such that for any $n\in \mathbb{N}$ there exists $x_n\in X^{\omega}$ such that 

\begin{enumerate}

\item $d^{\omega}(x_n,p)\geq R$, and
\item For all $k \leq n$, $|\nu (T^{\omega} P_{B(x_n,r_k)})-w|<1/2^k$.

\end{enumerate}
\end{lemma}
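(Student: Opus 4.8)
The plan is to produce the radii $(r_n)_{n\in\mathbb N}$ and the infinite hyperreal $R$ separately, deferring $R$ to the very end. Observe that $R$ only has to be an \emph{infinite lower bound} for the countable set $\{d^\omega(x_n,p)\mid n\in\mathbb N\}$; so once the points $x_n$ have been constructed and checked to lie in afar galaxies (whence each $d^\omega(x_n,p)$ is infinite), the countable saturation of $\mathbb R^\omega$ recalled in Section 2.1 immediately supplies such an $R$, and condition (1) holds for free. Thus the entire difficulty is condition (2): I must exhibit a \emph{single} sequence $r_n\to\infty$, fixed before $n$, together with, for each $n$, an afar point $x_n$ whose truncated lower norms at $r_1,\dots,r_n$ all fall within the prescribed tolerances of $w$.

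I would first record the elementary monotone convergence $\nu(T^\omega P_{B(x,r)})\downarrow\nu(T_G)$ as $r\to\infty$ for a fixed afar $x$ in a galaxy $G$. The lower bound $\nu(T^\omega P_{B(x,r)})\ge\nu(T_G)$ is automatic from $\ell^2(B(x,r))\subset\ell^2(G)$ and the fact that $T_G$ is the restriction $T^\omega|_{\ell^2(G)}$, using $T\in C^*_u(X)\subset Q(X,d)$. The real obstacle is that this convergence, galaxy by galaxy, has a rate depending on $G$: to approximate $w$ one is forced to use galaxies $G_m$ with $\nu(T_{G_m})\to w$, and a priori the radius needed to bring $\nu(T^\omega P_{B(\cdot,r)})$ close to $\nu(T_{G_m})$ may grow with $m$; since the radius sequence is quantified before $n$, no fixed $r_1$ could then serve every galaxy. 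This is precisely where Property A enters: I expect \cite[Prop.~7.6]{SpW} to yield a \emph{uniform} modulus, i.e. a function $\eta(r)\to0$ with
\[
0\le \nu(T^\omega P_{B(x,r)})-\nu(T_G)\le \eta(r)
\]
for \emph{every} afar $x$ lying in a galaxy $G$. The main technical point will be to confirm that \cite[Prop.~7.6]{SpW} furnishes exactly this uniform estimate—translating its limit-space formulation into the present galaxy/ball language and absorbing the discrepancy between $B([x_i],r)$ and $[B(x_i,r)]$ noted in Section 2.2.

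Granting the uniform modulus, I would fix an increasing sequence $r_n\to\infty$ with $\eta(r_n)<1/2^{n+1}$, which is possible since $\eta(r)\to0$. Then for each $n$, using $w\in\overline{\{\nu(T_G)\mid G\in AG(X^\omega)\}}$, I choose an afar galaxy $G^{(n)}$ with $|\nu(T_{G^{(n)}})-w|<1/2^{n+1}$ and pick any $x_n\in G^{(n)}$. For every $k\le n$ the uniform estimate gives $0\le\nu(T^\omega P_{B(x_n,r_k)})-\nu(T_{G^{(n)}})\le\eta(r_k)<1/2^{k+1}$, while $k\le n$ forces $|\nu(T_{G^{(n)}})-w|<1/2^{n+1}\le1/2^{k+1}$; adding these by the triangle inequality yields $|\nu(T^\omega P_{B(x_n,r_k)})-w|<1/2^{k+1}+1/2^{k+1}=1/2^k$, which is condition (2). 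Finally, each $x_n$ lies in an afar galaxy, so $d^\omega(x_n,p)$ is infinite, and countable saturation applied to $\{d^\omega(x_n,p)\}_{n\in\mathbb N}$ produces the infinite hyperreal $R\le d^\omega(x_n,p)$ for all $n$, giving condition (1) and completing the argument.
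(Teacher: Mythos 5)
Your scaffolding (galaxies $G^{(n)}$ with $|\nu(T_{G^{(n)}})-w|<1/2^{n+1}$, telescoping tolerances $1/2^{k+1}+1/2^{k+1}=1/2^k$, and producing the infinite $R$ at the end as a lower bound for the countable set of distances) matches the paper, but your central step is false: there is no uniform modulus $\eta(r)\to 0$ with $0\le \nu(T^{\omega}P_{B(x,r)})-\nu(T_G)\le\eta(r)$ for \emph{every} afar $x$ in its galaxy $G$, and \cite[Proposition 7.6]{SpW} does not furnish one. That proposition is existential in the \emph{location} of the truncation: for each $\delta$ there is $s(\delta)$ such that every finite truncation admits \emph{some} further truncation of diameter $\le s$ with lower norm changed by at most $\delta$; it gives no control over truncation by a ball around an \emph{arbitrary} center. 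Concretely, take $X=\mathbb{N}$ (which is ULF with Property A) and let $T$ be multiplication by $f$ with $f\equiv 1$ except $f(2^k)=0$, so $T\in\mathbb{C}_u[X]$ and $\nu(TP_F)=\min_{n\in F}|f(n)|$. Fix $m\in\mathbb{N}$, let $x=[2^{k_i}+m]$ with $k_i\to\infty$, let $G$ be its galaxy, and set $y=[2^{k_i}]\in G$. Then $st\circ f^{\omega}(y)=0$, so $T_G\delta_y=0$ and $\nu(T_G)=0$; yet for any real $r<m$ the ball $B(x,r)$ consists of points $[2^{k_i}+m+j_i]$ with $|j_i|\le r$, which eventually avoid all powers of $2$, so $st\circ f^{\omega}\equiv 1$ there and $\nu(T^{\omega}P_{B(x,r)})=1$. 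Since $m$ is arbitrary, for every $r$ there are centers (even within a single galaxy) where the discrepancy is $1$, so your prescription ``pick any $x_n\in G^{(n)}$'' can violate condition (2) already at $k=0$.

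What Property A actually buys, and what the paper's proof exploits, is uniformity of the \emph{diameter} $s(\delta)$ together with a careful choice of center. The paper first transfers \cite[Proposition 7.6]{SpW} to galaxies (its Propositions 11 and 12) and then, for each $n$, builds a nested chain $Y_0\subset Y_1\subset\cdots\subset Y_n\subset G_n$ with $\mathrm{diam}(Y_k)\le r_k$ whose lower norms telescope within $1/2^{k+1}$ at each stage; only \emph{then} is the center chosen as a point $x_n\in Y_0$, so that $Y_k\subset B(x_n,r_k)$ and the sandwich $\nu(T_{G_n})\le\nu(T^{\omega}P_{B(x_n,r_k)})\le\nu(T_{G_n}P_{Y_k})$ yields condition (2) for all $k\le n$ simultaneously. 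This choice of a well-located center via nested truncation sets is precisely the idea missing from your proposal; the uniform-in-$x$ convergence rate you posited cannot substitute for it.
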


Note that $\nu (T^{\omega} P_{B(x_n,r_k)})$ can be formulated as the standard part of a hyperreal defined in the nonstandard universe, so that we can apply the saturation argument to prove Proposition 8 from Lemma 1. For those who are not familiar with the saturation property, we give a direct proof here, which is an adaptation of the proof of countable saturation of ultraproduct, via diagonal argument. 

We first note that finitely truncated lower norm commutes with ultraproducts.

\begin{prop}

A finite set $F\subset X^\omega$ is always an ultraproduct $F=[F_i]$ with $\#F_i=\#F$ for $\omega$-almost all $i$'s, and $\nu (T^\omega P_{F})=\lim _{i\to \omega}\nu(TP_{F_i})$.\qed

\end{prop}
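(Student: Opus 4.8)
The plan is to split the statement into its two assertions: the set-theoretic identity $F=[F_i]$ with $\#F_i=\#F$ for $\omega$-almost all $i$'s, and the analytic identity for the finitely truncated lower norm. Throughout I read $\nu(T^\omega P_F)$ (resp.\ $\nu(TP_{F_i})$) as the lower norm of the operator restricted to the range of the projection, i.e.\ $\inf\{\|T^\omega v\|\mid v\in\ell^2(F),\ \|v\|=1\}$, which is the quantity appearing in Lemma 1 and Proposition 8. For the set-theoretic claim I would write $F=\{f^1,\dots,f^m\}$, choose representatives $f^k=[f^k_i]$, and set $F_i:=\{f^1_i,\dots,f^m_i\}$. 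Since the $f^k$ are pairwise distinct in $X^\omega$, for each pair $k\neq l$ the set $\{i\mid f^k_i\neq f^l_i\}$ lies in $\omega$; intersecting these finitely many sets gives a $J\in\omega$ on which all $f^k_i$ are distinct, so $\#F_i=m=\#F$ for $\omega$-almost all $i$'s. To see $[F_i]=F$, note $F\subset[F_i]$ trivially, while any $[y_i]\in[F_i]$ has $y_i\in F_i$ $\omega$-almost everywhere; assigning to such $i$ an index $k_i$ with $y_i=f^{k_i}_i$ and applying the pigeonhole principle for $\omega$ over the finite set $\{1,\dots,m\}$ yields a single $k$ with $\{i\mid y_i=f^k_i\}\in\omega$, whence $[y_i]=f^k\in F$.

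Then I turn to the lower-norm identity, proving two inequalities by producing explicit representative vectors and using that $T^\omega[v_i]=[Tv_i]$ and that norms pass to ultralimits. For $\nu(T^\omega P_F)\geq\lim_{i\to\omega}\nu(TP_{F_i})$, take any unit vector $v=\sum_k c^k\delta_{f^k}\in\ell^2(F)$ and set $v_i:=\sum_k c^k\delta_{f^k_i}\in\ell^2(F_i)$; for $\omega$-almost all $i$'s the $f^k_i$ are distinct so $\|v_i\|=1$ and $[v_i]=v$, whence $\|T^\omega v\|=\|[Tv_i]\|=\lim_{i\to\omega}\|Tv_i\|\geq\lim_{i\to\omega}\nu(TP_{F_i})$; taking the infimum over $v$ gives the inequality. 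For the reverse inequality I would use that the unit sphere of the finite-dimensional space $\ell^2(F_i)$ is compact, so for $\omega$-almost every $i$ the lower norm $\nu(TP_{F_i})$ is attained at some unit $v_i\in\ell^2(F_i)$ with $\|Tv_i\|=\nu(TP_{F_i})$; assembling $v:=[v_i]$, the identities $T^\omega[v_i]=[Tv_i]$ and $\|[v_i]\|=\lim_{i\to\omega}\|v_i\|=1$ give $\nu(T^\omega P_F)\leq\|T^\omega v\|=\lim_{i\to\omega}\nu(TP_{F_i})$, and the two bounds combine to the claimed equality.

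The main point to verify carefully—and where I expect the only real subtlety—is that the assembled vector $v=[v_i]$, a priori only an element of the metric ultrapower $(\ell^2(X))^\omega$, genuinely lands in $\ell^2(F)\subset\ell^2(X^\omega)$ with unit norm, i.e.\ that none of its mass leaks into the complement $H_3$ of $\ell^2(X^\omega)$ or spreads off $F$. This follows because each $v_i$ is supported on the $m$-point set $F_i$: writing $c^k:=st([(v_i,\delta_{f^k_i})])$, one computes $([v_i],\delta_{f^k})=c^k$ and $\|[v_i]\|^2=\lim_{i\to\omega}\sum_k|(v_i,\delta_{f^k_i})|^2=\sum_k|c^k|^2$, and comparing these norms and inner products forces $[v_i]=\sum_k c^k\delta_{f^k}\in\ell^2(F)$. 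As an alternative to the extremal-vector argument one could instead write $\nu(\cdot)^2$ as the least eigenvalue of the $m\times m$ Gram matrix with entries $(T^\omega\delta_{f^l},T^\omega\delta_{f^k})=\lim_{i\to\omega}(T\delta_{f^l_i},T\delta_{f^k_i})$ and invoke continuity of $\lambda_{\min}$ together with the fact that continuous functions commute with the standard part; the direct vector argument above, however, is cleaner and avoids spectral perturbation estimates.
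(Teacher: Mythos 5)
Your proof is correct, and since the paper states this proposition with no argument at all (it is marked as immediate, and the same fact is already invoked without proof in Proposition 3), your write-up simply supplies the routine verification the authors intend, in the same representative-and-ultralimit style used throughout the paper. You also correctly identify and close the one genuinely non-obvious point: that the extremal vectors $v_i\in\ell^2(F_i)$, obtained from compactness of the finite-dimensional unit sphere, assemble to $[v_i]\in\ell^2(F)\subset\ell^2(X^\omega)$ with no mass leaking into $H_3$ or off $F$ (your Pythagoras/inner-product computation using the uniformly $m$-point supports settles this), and your reading of $\nu(TP_F)$ as the infimum over unit vectors supported in $F$ is the right one, being the only convention under which Proposition 10's inequality $0\leq\nu(TP_Y)-\nu(TP_F)$ for $Y\subset F$ makes sense.
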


Now we prove Proposition 8 using Lemma 1. In the following proof, the index $i\in I$ will be written as functions (e.g., $R=[R(i)]$) rather than a subindex.

\begin{proof}[Proof of Proposition 8]

For each $n\in\mathbb{N}$, let $x_n=[x_n(i)]$ satisfy the lemma. Let $I_n$ be the set of $i$'s satisfying the following properties:

\begin{enumerate}

\item $d(x(i),p)\geq R(i)$
\item For every $k \leq n$, $|\nu (T P_{B(x_n(i),r_k)})-w|<1/2^k$
\end{enumerate}

By Proposition 9, $I_n\in \omega$. By countable incompleteness of $\omega$, there exists a decreasing sequence $(J_n)_{n\in\mathbb{N}}$ such that every $J_n \in \omega$ and $\cap J_n=\phi$. Let $K_n=(\cap _{k=0}^{n}I_k) \cap J_n$, then every $K_n \in \omega$ and $K_n$ is decreasing with $\cap K_n=\phi$. 
Define $x=[x(i)]\in X^\omega$ by the following steps:

Step 0: Let $x=x_0$.

Step $n$ for $n\geq 1$: For $i\in K_n$, re-value $x(i)$ to be $x_n(i)$.

For every $i$, $x(i)$ will not be re-valued after finite steps, hence $x=[x(i)]$ is well-defined. By its construction, it satisfies the following properties:

\begin{enumerate}

\item $d(x(i),p)\geq R(i)$ for $i\in K_0$, and
\item For any $n\in\mathbb{N}$, $|\nu (T P_{B(x(i),r_n)})-w|<1/2^n$ for $i\in K_n$.
\end{enumerate}

Thus $x=[x(i)]$ satisfies the proposition.
\end{proof}

Before proving Lemma 1, we need \cite[Proposition 7.6]{SpW}:

\begin{prop}[\cite{SpW}]
Let $(X,d)$ be a ULF metric space with Property A and $T\in C^*_u(X)$. Then for any $\delta >0$, there exists $s>0$ such that for any finite $F \subset X$, their exists a non-empty $Y \subset F$ with $diam(Y)\leq s$ such that $0 \leq  \nu (T P_Y) - \nu (T P _F)\leq \delta$.
\end{prop}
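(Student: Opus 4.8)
The plan is to establish the two inequalities separately. The lower bound $0\le\nu(TP_Y)-\nu(TP_F)$ is immediate from monotonicity of the lower norm: since $Y\subset F$, every unit vector supported in $Y$ is a unit vector supported in $F$, so the infimum defining $\nu(TP_Y)$ is taken over a smaller set and hence $\nu(TP_Y)\ge\nu(TP_F)$. All the work goes into producing a small-diameter $Y$ with $\nu(TP_Y)\le\nu(TP_F)+\delta$. I would first reduce to the case where $T$ has finite propagation: since $T\in C^*_u(X)$ and the lower norm is $1$-Lipschitz in the operator norm (indeed $|\nu(AP_Z)-\nu(BP_Z)|\le\|A-B\|$ for every $Z$), replacing $T$ by a finite-propagation approximant changes every quantity $\nu(TP_Z)$ by an arbitrarily small amount. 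So assume $T$ has propagation at most $R$.

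The heart of the argument is a localization of a near-minimizer using Property A. Fix a unit vector $v\in\ell^2(F)$ with $\|Tv\|\le\nu(TP_F)+\delta_1$. From Property A, applied with the parameters $R$ and a small $\epsilon>0$ to be fixed, I obtain an $S>0$ and a family of normalized, positive, finitely supported, slowly varying vectors $\{\eta_x\}_{x\in X}$, i.e.\ $\eta_x\ge 0$, $\|\eta_x\|_2=1$, $supp(\eta_x)\subset B(x,S)$, and $\|\eta_x-\eta_y\|_2\le\epsilon$ whenever $d(x,y)\le R$ (the standard $\ell^2$-normalized reformulation of Property A, obtained from the $\ell^1$-version by taking pointwise square roots). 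Setting $\chi_z(x):=\eta_x(z)$ gives a partition of unity indexed by $z\in X$ with the three properties I need: $\sum_z\chi_z(x)^2=\|\eta_x\|_2^2=1$, each $supp(\chi_z)\subseteq B(z,S)$ so that $diam(supp(\chi_z))\le 2S=:s$, and $\sum_z|\chi_z(x)-\chi_z(y)|^2=\|\eta_x-\eta_y\|_2^2\le\epsilon^2$ for $d(x,y)\le R$. Crucially $s$ depends only on $R$ and $\epsilon$, hence only on $\delta$ and $T$, and not on $F$.

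Now I would localize $v$ by setting $v_z:=\chi_z v$, which are supported in $B(z,S)\cap F$ and satisfy $\sum_z\|v_z\|^2=\langle(\sum_z\chi_z^2)v,v\rangle=1$. Writing $Tv_z=\chi_z\,Tv+[T,M_{\chi_z}]v$ for the multiplication operator $M_{\chi_z}$, and using $\sum_z\chi_z^2=1$, the diagonal parts satisfy $\sum_z\|\chi_z Tv\|^2=\|Tv\|^2$, while the commutator terms are controlled by the slow-variation and finite-propagation hypotheses: the kernel of $[T,M_{\chi_z}]$ at $(x,y)$ is $k_T(x,y)(\chi_z(y)-\chi_z(x))$ and vanishes unless $d(x,y)\le R$, so a Cauchy--Schwarz estimate together with the ULF bound $N=\sup_x\#B(x,R)$, the kernel bound $|k_T(x,y)|\le\|T\|$, and $\sum_z|\chi_z(x)-\chi_z(y)|^2\le\epsilon^2$ yields $\sum_z\|[T,M_{\chi_z}]v\|^2\le N^2\|T\|^2\epsilon^2$. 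By Minkowski's inequality this gives
\[
\Big(\sum_z\|Tv_z\|^2\Big)^{1/2}\le\|Tv\|+N\|T\|\epsilon\le\nu(TP_F)+\delta_1+N\|T\|\epsilon .
\]
Choosing $\delta_1$ and $\epsilon$ so that $\delta_1+N\|T\|\epsilon<\delta$ makes the right-hand side strictly below $\nu(TP_F)+\delta$.

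Finally I would extract $Y$ by a pigeonhole argument. If every $z$ with $v_z\ne 0$ satisfied $\|Tv_z\|>(\nu(TP_F)+\delta)\|v_z\|$, then $\sum_z\|Tv_z\|^2>(\nu(TP_F)+\delta)^2\sum_z\|v_z\|^2=(\nu(TP_F)+\delta)^2$, contradicting the previous display. Hence there is a $z_0$ with $v_{z_0}\ne 0$ and $\|Tv_{z_0}\|\le(\nu(TP_F)+\delta)\|v_{z_0}\|$. Taking $Y:=supp(v_{z_0})\subseteq B(z_0,S)\cap F$, which is nonempty, contained in $F$, and of diameter at most $s$, the test vector $v_{z_0}$ shows $\nu(TP_Y)\le\|Tv_{z_0}\|/\|v_{z_0}\|\le\nu(TP_F)+\delta$, completing both inequalities. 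The main obstacle is the localization estimate of the two middle paragraphs: one must invoke Property A in exactly the $\ell^2$-normalized partition-of-unity form and carry out the commutator bound so that the diameter bound $s$ emerges uniformly in $F$ while the error is genuinely controlled by $\epsilon$; the reduction to finite propagation is precisely what makes the kernel support finite and hence this estimate possible.
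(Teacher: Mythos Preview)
The paper does not give a proof of this proposition; it is quoted as \cite[Proposition~7.6]{SpW} and used as a black box, so there is no in-paper argument to compare against. Your proof is correct and is essentially the argument of \v{S}pakula--Willett: reduce to finite propagation by a norm approximation (using that $\nu(\,\cdot\,P_Z)$ is $1$-Lipschitz in the operator norm uniformly in $Z$), invoke the $\ell^2$-normalized partition-of-unity form of Property~A to localize a near-minimizer $v$ of the lower norm, bound $\sum_z\|[T,M_{\chi_z}]v\|^2$ via the slow-variation estimate together with the ULF bound on $\#B(x,R)$, and extract a small-diameter piece $Y$ by the averaging/pigeonhole step. One cosmetic remark: you are tacitly reading $\nu(TP_F)$ as the lower norm of $T$ restricted to $\ell^2(F)$, i.e.\ $\inf\{\|Tv\|:\|v\|=1,\ v\in\ell^2(F)\}$, rather than the lower norm of the operator $TP_F$ on all of $\ell^2(X)$ (which would be zero whenever $F\ne X$); this is the intended interpretation here and you use it consistently.
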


The proposition says that truncations of $T$ can be truncated further by uniformly bounded sets, with their lower norms changing arbitrarily small. Next proposition shows that the same holds for limit operators of $T$ with the same coefficients:

\begin{prop}
For any $\delta >0$, let $s$ be as in Proposition 10. Then for any galaxy $G$ of $X^\omega$, and finite set $F\subset G$, there exists a non-empty $Y \subset F$ with $diam(Y)\leq s$ such that $0\leq \nu(T_G P_Y)-\nu(T_G P_F)\leq \delta$.
\end{prop}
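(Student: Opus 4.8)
The plan is to reduce the statement to Proposition 10 applied in each coordinate and then transport the conclusion across the ultraproduct by means of Proposition 9. The first point to pin down is that, for a finite subset $F\subset G$, the truncated lower norm $\nu(T_G P_F)$ coincides with $\nu(T^\omega P_F)$, so that Proposition 9 is directly applicable. Since $T\in C^*_u(X)\subset C^*_{ql}(X,d)\subset Q(X,d)$, the subspace $\ell^2(G)$ is invariant under $T^\omega$; hence for every $v\in\ell^2(F)\subset\ell^2(G)$ we have $T_G v=P_G T^\omega v=T^\omega v$, so $\nu(T_G P_F)=\nu(T^\omega P_F)=\lim_{i\to\omega}\nu(TP_{F_i})$, where $F=[F_i]$ with $\#F_i=\#F$ for $\omega$-almost all $i$ as furnished by Proposition 9. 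The same identity holds verbatim for any finite $Y\subset F$.

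Next I would invoke Proposition 10 fiberwise. For $\omega$-almost all $i$ the set $F_i$ is a genuine finite subset of $X$, so Proposition 10 (with the given $s$) yields a non-empty $Y_i\subset F_i$ with $diam(Y_i)\leq s$ and $0\leq\nu(TP_{Y_i})-\nu(TP_{F_i})\leq\delta$. Setting $Y:=[Y_i]$, I would verify that $Y$ is a legitimate finite subset of $G$: each inclusion $Y_i\subset F_i$ gives $Y\subset F\subset G$; the cardinalities $\#Y_i$ all lie in the finite range $\{1,\dots,\#F\}$, so by the ultrafilter there is a fixed $m$ with $\#Y_i=m$ for $\omega$-almost all $i$, whence $\#Y=m$; and $Y$ is non-empty because every $Y_i$ is. The diameter bound passes to the ultraproduct, since for $y=[y_i],y'=[y_i']\in Y$ one has $d^\omega(y,y')=[d(y_i,y_i')]\leq[s]=s$, so the diameter of $Y$ with respect to $st\circ d^\omega$ is at most $s$.

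Finally I would take the $\omega$-limit of the coordinatewise estimates. By Proposition 9 applied to $Y$ we get $\nu(T_G P_Y)=\lim_{i\to\omega}\nu(TP_{Y_i})$, and combining with the corresponding identity for $F$ the difference equals $\lim_{i\to\omega}\big(\nu(TP_{Y_i})-\nu(TP_{F_i})\big)$. As each term lies in $[0,\delta]$ for $\omega$-almost all $i$, the $\omega$-limit lies in $[0,\delta]$ as well, which gives $0\leq\nu(T_G P_Y)-\nu(T_G P_F)\leq\delta$ and completes the argument.

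The main obstacle is not the inequality itself but two pieces of bookkeeping: identifying $Y=[Y_i]$ as a well-defined finite subset of $G$ of constant cardinality, which requires the pigeonhole step over the cardinalities $\#Y_i$ through $\omega$, and, more essentially, justifying $\nu(T_G P_F)=\nu(T^\omega P_F)$. The latter is precisely where the invariance of $\ell^2(G)$ under $T^\omega$ (i.e.\ $T\in Q(X,d)$) is indispensable: without it $T_G$ and $T^\omega$ could disagree on $\ell^2(F)$ and the reduction to the fiberwise lower norms would break down.
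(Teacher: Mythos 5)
Your proof is correct and takes essentially the same route as the paper's: apply Proposition 10 coordinatewise to $F=[F_i]$, set $Y=[Y_i]$, and transport the estimate through the ultraproduct via Proposition 9 together with the identities $T_G P_F=T^\omega P_F$ and $T_G P_Y=T^\omega P_Y$. The additional bookkeeping you supply (the pigeonhole argument fixing $\#Y_i$, the diameter bound passing to $st\circ d^\omega$, and the justification of $T_G P_F=T^\omega P_F$ via invariance of $\ell^2(G)$ under $T^\omega$) is precisely what the paper's terser proof leaves implicit.
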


\begin{proof}
Let $F=[F_i]$, for each $F_i$, by using the above proposition, we get $Y_i$. Let $Y=[Y_i]$. By Proposition 10, we have $0\leq \nu(T^\omega P_Y)-\nu(T^\omega P_F)\leq \delta$. Notice that $T_G P_F=T^\omega P_F$ and $T_G P_Y=T^\omega P_Y$, we finish the proof.
\end{proof}

To apply Proposition 11 to $T_G$, we should first truncate $T_G$ on some finite set:

\begin{prop}
For any $\delta >0$, let $s$ be as in Proposition 10. Then for any galaxy $G$ of $X^\omega$ and $\delta^\prime >\delta$, there exists non-empty set $Y$ with $diam(Y)\leq s$ such that $0\leq \nu(T_G P_Y)-\nu(T_G)<\delta^\prime$.
\end{prop}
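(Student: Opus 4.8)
The plan is to reduce the assertion to a genuine finite truncation, where Proposition 11 applies, and to absorb the residual error by approximating $\nu(T_G)$ with finitely supported vectors. The guiding observation is that the finitely truncated lower norms decrease to $\nu(T_G)$: because the Dirac functions $\{\delta_x\mid x\in G\}$ form an orthonormal basis of $\ell^2(G)$, the finitely supported unit vectors are norm-dense in the unit sphere of $\ell^2(G)$, so that
\[
\nu(T_G)=\inf\{\nu(T_G P_F)\mid F\subset G\text{ finite}\},
\]
while at the same time $\nu(T_G P_F)\geq \nu(T_G)$ for every finite $F$, since the infimum defining $\nu(T_G P_F)$ runs over the smaller set $\ell^2(F)\subset\ell^2(G)$.

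First I would fix $\delta'>\delta>0$ and use this equality to choose a finite set $F\subset G$ with $0\leq \nu(T_G P_F)-\nu(T_G)<\delta'-\delta$. Concretely, pick a unit vector $v$ with $\|T_G v\|<\nu(T_G)+\tfrac12(\delta'-\delta)$, replace it by a nearby finitely supported unit vector $w$, supported on some finite $F$, and combine $\nu(T_G P_F)\leq\|T_G w\|$ with the norm estimate $\big|\,\|T_G v\|-\|T_G w\|\,\big|\leq\|T_G\|\,\|v-w\|$ to obtain the bound for a sufficiently fine approximation.

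Next I would feed this $F$ into Proposition 11, which produces a non-empty $Y\subset F$ with $diam(Y)\leq s$ and $0\leq\nu(T_G P_Y)-\nu(T_G P_F)\leq\delta$. Adding the two estimates then yields
\[
0\leq \nu(T_G P_Y)-\nu(T_G)=\big(\nu(T_G P_Y)-\nu(T_G P_F)\big)+\big(\nu(T_G P_F)-\nu(T_G)\big)<\delta+(\delta'-\delta)=\delta',
\]
where the lower bound comes from the monotonicity $\nu(T_G P_Y)\geq\nu(T_G P_F)\geq\nu(T_G)$, valid because $Y\subset F\subset G$. Since $diam(Y)\leq s$ and $Y$ is non-empty, this is exactly the claimed conclusion.

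The only delicate point is the approximation step that produces $F$: one must verify that passing from a near-minimizing unit vector to a nearby finitely supported one perturbs $\|T_G v\|$ by less than the allotted slack $\tfrac12(\delta'-\delta)$, which is immediate from the reverse triangle inequality for the norm together with boundedness of $T_G$. Everything else is a direct combination of Proposition 11 with the monotonicity and density properties of the finitely truncated lower norms, so I expect no further obstacles.
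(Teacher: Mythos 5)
Your proof is correct and follows essentially the same route as the paper: choose a finite $F\subset G$ with $0\leq\nu(T_G P_F)-\nu(T_G)<\delta'-\delta$, then apply Proposition 11 to truncate further to $Y\subset F$ with $diam(Y)\leq s$, and add the two estimates. The only difference is that you spell out the density/approximation argument behind the existence of $F$ (which the paper leaves implicit), and that detail is handled correctly via the reverse triangle inequality and monotonicity of the truncated lower norms.
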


\begin{proof}

Find $F\subset G$ finite such that $0\leq \nu(T_G P_F)-\nu(T_G)<\delta^\prime-\delta$. Then use the above proposition to find a $Y\subset F$ such that $0\leq \nu(T_G P_Y)-\nu(T_G P_F)\leq \delta$. Then $0\leq \nu(T_G P_Y)-\nu(T_G)<\delta^\prime$.
\end{proof}

It is time to prove Lemma 1.

\begin{proof}[Proof of Lemma 1]

Let $G_n$ be an afar galaxy with $|\nu(T_{G_n})-w|<1/2^{n+1}$. As in Section 2.1, their is an infinite positive hyperreal $R$ such for any $G_n$ and any element $x\in G_n$, $d^\omega(p,x)\geq R$. By the Proposition 12, let $s_n$ be a coefficient $s$ for $\delta=1/(1.001\times 2^{n+1})$. Take a sequence $r_n\geq s_n$, monotonically increasing to infinite. For any $n\in \mathbb{N}$, construct $Y_n$ from $G_n$ using Proposition 12, then build $Y_k$ from $Y_k+1$ using Proposition 11 inductively. Then we get a sequence $Y_0\subset Y_1\subset \cdots \subset Y_n \subset G_n$ such that 

\begin{enumerate}

\item $diag(Y_k)\leq r_k$ 
\item $0\leq\nu (T_{G_n} P_{Y_n})-\nu (T_{G_n}) < 1/2^{n+1}$.
\item For $0\leq k \leq n-1$, $0\leq \nu (T_{G_n} P_{Y_k})-\nu (T_{G_n} P_{Y_{k+1}})<1/2^{k+1}$

\end{enumerate}
In that case, $\nu (T_{G_n} P_{Y_k})-\nu (T_{G_n})<1/2^{k+1}+1/2^{k+2}+\cdots +1/2^{n+1}=1/2^k-1/2^{n+1}$.
Choose $x_n \in Y_0$, then $0\leq \nu (T_{G_n} P_{B(x_n,r_k)})-\nu (T_{G_n})\leq \nu (T_{G_n} P_{Y_k})-\nu (T_{G_n})<1/2^k-1/2^{n+1}$. Hence $|\nu (T_{G_n} P_{B(x_n,r_k)})-w|<1/2^k$.

Hence let $R$ and $r_n$ be as above, then for any $n\in \mathbb{N}$, we find an $x_n$ satisfying the lemma. 
\end{proof}

Thus, we prove Lemma 1, hence Proposition 8. By Proposition 8,  $\{\nu (T_G)\mid G\in AG(X^\omega)\}$ is closed. Hence, if $T_G$ is invertible for all afar galaxies $G$, then $T_G^{-1}$ is automatically uniformly bounded. If not, since if $\|T_G^{-1}\|$ are unbounded, $0$ will be in the closure of $\{\nu (T_G)\mid G\in AG(X^\omega)\}$, hence in $\{\nu (T_G)\mid G\in AG(X^\omega)\}$ itself. Therefore, there exists $G$ such that $\nu(T_G)=0$, contradicting the statement that all $T_G$ are invertible. Thus, the uniform boundedness condition in Theorem 1 (1) can be removed in the case of Yu's Property A. Note that the ghost ideal in $C_u^*(X,d)$ is equal to the set of compact operators by Property A of $(X,d)$, therefore Theorem 1 (2) follows. 

\section*{Acknowledgement}

The authors would like to thank Isaac Goldbring for inspiring discussions and advice. We also thank Jintao Deng for his carefully reading on the draft of this paper.

\bibliography{ref.bib}
\bibliographystyle{alpha}

\end{document}